\newcommand{\N}{\ensuremath{\mathbb{N}}\xspace}
\newcommand{\R}{\ensuremath{\mathbb{R}}\xspace}
\newcommand{\eps}{\epsilon}
\renewcommand{\epsilon}{\varepsilon}
\newcommand{\dt}{\, dt}
\newcommand{\ud}{\ensuremath{u_{\delta}}\xspace}
\newcommand{\ul}{u_\lambda}
\newcommand{\dul}{\dot{u}_\lambda}
\newcommand\blfootnote[1]{%
  \begingroup
  \renewcommand\thefootnote{}\footnote{#1}%
  \addtocounter{footnote}{-1}%
  \endgroup
}
\newcommand{\thickhline}{%
    \noalign {\ifnum 0=`}\fi \hrule height 2pt
    \futurelet \reserved@a \@xhline
}
\begin{document}

\numberwithin{equation}{section}

\newtheoremstyle{break}{15pt}{15pt}{\itshape}{}{\bfseries}{}{\newline}{}
\theoremstyle{break}
\newtheorem*{Satz*}{Theorem}
\newtheorem*{Rem*}{Remark}
\newtheorem*{Lem*}{Lemma}
\newtheorem{Satz}{Theorem}[section]
\newtheorem{Rem}{Remark}[section]
\newtheorem{Lem}{Lemma}[section]
\newtheorem{Prop}{Proposition}[section]
\newtheorem{Cor}{Corollary}[section]
\theoremstyle{definition}
\newtheorem{Def}[Satz]{Definition}
\theoremstyle{plain}
\newtheorem{exmp}{Example}[section]
\parindent2ex
\newenvironment{rightcases}
  {\left.\begin{aligned}}
  {\end{aligned}\right\rbrace}

\begin{center}{\Large \bf Signal recovery via TV-type energies}
\end{center}

\begin{center}
M. Fuchs, J. M\"uller, C. Tietz
\end{center}
\textit{Dedicated to the memory of Stefan Hildebrandt}

\noindent AMS Subject Classification: 26A45, 49J05, 49J45, 49M29, 34B15 \\
Keywords: total variation, signal denoising, variational problems in one independent variable, linear growth, existence and regularity of solutions.

\begin{abstract}
We consider one-dimensional variants of the classical first order total variation denoising model introduced by Rudin, Osher and Fatemi. This study is based on our previous work on various denoising and inpainting problems in image analysis, where we applied variational methods in arbitrary dimensions. More than being just a special case, the one-dimensional setting allows us to study regularity properties of minimizers by more subtle methods that do not have correspondences in higher dimensions. In particular, we obtain quite strong regularity results for  a class of data functions that contains many of the standard examples from signal processing such as rectangle- or triangle signals as a special case. An analysis of the related Euler-Lagrange equation, which turns out to be a second order two-point boundary value problem with Neumann conditions, by ODE methods completes the picture of our investigation.
\end{abstract}

\begin{section}{Introduction}
\blfootnote{\textbf{Acknowledgement}:
The authors thank Michael Bildhauer for many stimulating discussions.}
Since the publication of the  seminal paper \cite{ROF} of Rudin, Osher and Fatemi in 1992, total variation based denoising and inpainting methods have proved to be very effective when dealing with two- or higher dimensional noisy data such as digital images, which nowadays has become their main field of application. However, their one-dimensional counterparts in signal processing seem to find usage as well, mainly in connection with the recovery of piecewise constant data as it is frequently encountered in many practical sciences such as geophysics or biophysics (cf. \cite{LJ} and the introduction of \cite{BPS}), whereas in e.g. \cite{FIMT} TV-models have been applied to the filtering of gravitational wave signals. Apart from the variety of possible applications, our interest in the one-dimensional case primarily stems from our previous work on TV-based variational problems in image analysis. In \cite{BF1, BF2, BF3, BF5, BF6,BFT,FT}, variants of the classical TV-functional have been studied in any dimension replacing the regularization term by a convex functional of linear growth, which approximates the TV-seminorm and in addition has suitable ellipticity properties that make the considered models more feasible to analytical techniques. When trying to improve our results in the one-dimensional setting, we found ourselves surprised that first, this is not a consequence of completely elementary arguments and second, there are certain features of the corresponding solutions that do not seem to have analogues in arbitrary dimensions. In this context, we would also like to mention the works \cite{BKV}, \cite{CS} and \cite{BP} where similar considerations have been applied to study  the classical TV-model as well as its generalizations towards functionals that involve higher derivatives in one dimension.

We proceed with a precise formulation of our setting and results.\\
Let $f\in L^\infty(0,1)$ be a given function which represents an observed signal (possibly corrupted by an additive Gaussian noise). We will always assume $0\leq f\leq 1$ a.e. on $(0,1)$. For a given density function $F:\R\rightarrow[0,\infty)$ of linear growth we consider the following minimization problem: 

\begin{align}\label{model}
 J[u]:=\intop_{0}^1 F(\dot{u})\,dt+\frac{\lambda}{2}\intop_{0}^1(u-f)^2\,dt\rightarrow\min.
\end{align}
Here, $\intop dt$ is Lebesgue's integral in one dimension, $\dot{u}:=\frac{d}{dt}u$ denotes the (weak) derivative of a function $u:(0,1)\rightarrow\R$ and $\lambda>0$ is a regularization parameter which controls the balance between the smoothing and the data-fitting effect resulting from the minimization of the first and the second integral respectively. We impose the following mild conditions on our energy density $F$:
\begin{align}
 &F\in C^2(\R),\;F(-p)=F(p), F(0)=0,\label{F1} \tag{F1} \\ 
 &|F'(p)|\leq\nu_1,\label{F2}\tag{F2} \\ 
 &F(p)\geq \nu_2|p|-\nu_3\text{ and } \label{F3}\tag{F3} \\ 
 &F''(p)>0 \label{F4}\tag{F4}
\end{align}
for all $p\in\R$ and for some constants $\nu_1,\nu_2>0$, $\nu_3\in\R$. Note that from (\ref{F1}) and (\ref{F2}) it follows $F(p)\leq \nu_1|p|$ for all $p\in\R$. Moreover it should be obvious that the condition $F(0)=0$ is just imposed for notational simplicity. Examples of a reasonable choice of $F$ are given by the regularized TV-density, $F_\eps(p):=\sqrt{\eps^2+p^2}-\eps$ for some $\eps>0$ or $F(p):=\Phi_\mu(|p|)$, where $\Phi_\mu$ denotes the standard example of a so called $\mu$-elliptic density, i.e. for a given ellipticity parameter $\mu>1$ we consider
\[
 \Phi_{\mu}(r):=\int\limits_{0}^{r}{\int\limits_{0}^{s}{(1+t)^{-\mu}\dt}\,ds},\quad r\geq0,
\]
and observe the formulas
\begin{align}\label{Phimu}
 \left\{\begin{aligned}
 &\Phi_\mu(r)=\frac{1}{\mu-1}r+\frac{1}{\mu-1}\frac{1}{\mu-2}(r+1)^{-\mu+2}-\frac{1}{\mu-1}\frac{1}{\mu-2},\;\mu\neq 2,\\
 &\Phi_2(r)=r-\ln(1+r),\;r\geq 0.
\end{aligned}\right.
\end{align}
It is easily confirmed that $F(p):=\Phi_\mu(|p|)$, $p\in\R$, satisfies the condition of $\mu$-ellipticity
\begin{align}\label{muell}\tag{F5}
 F''(p)\geq \frac{c_1}{(1+|p|)^\mu}
\end{align}
for a constant $c_1>0$  as well as (\ref{F1})-(\ref{F4}). We remark that we have the validity of 
\[
 \lim\limits_{\mu\rightarrow\infty}{(\mu-1)\Phi_{\mu}(|p|)}=|p|,
\]
which underlines that $\Phi_{\mu}(|p|)$ is a good candidate for approximating the TV-density (see, e.g. \cite{BF1}, \cite{BF2}, \cite{BF3} or \cite{FT}). We further define the positive number
\begin{align}\label{lambdainf}
 \lambda_\infty=\lambda_{\infty}(F):=\lim_{p\rightarrow\infty}F'(p).
\end{align}
This value will turn out to be sort of a natural threshold in the investigation of the regularity properties of minimizers of problem (\ref{model}). 
\begin{exmp}\label{exmp1}
 For $F_\eps$ it is immediate that $\lambda_\infty(F_\eps)=1$ independently of $\eps$, whereas for $F=\Phi_\mu$ we have
\[
\lambda_\infty(\Phi_\mu)=\frac{1}{\mu-1}.
\]
\end{exmp}

Before giving a résumé of our results concerning problem (\ref{model}), we have to add some comments on functions and related spaces. For a general overview on one-dimensional variational problems and a synopsis of the related function spaces, we refer to \cite{BGH}. For $1\leq p\leq \infty$ and $m\in\N$ we denote the standard Sobolev space on the interval $(0,1)$ of (locally) $m$-times weakly in $L^p(0,1)$ differentiable functions by $W^{m,p}_{(\mathrm{loc})}(0,1)$ equipped with the norm $\|\cdot\|_{m,p}$. For a more detailed analysis of these spaces we refer to classical textbooks on this subject such as e.g. \cite{Ad}. We will frequently make use of the identification $ W^{m,\infty}(0,1)=C^{m-1,1}\big([0,1]\big)$, where for $0<\alpha\leq 1$ $C^{m,\alpha}(0,1)$ as usual denotes the space of $m$-times differentiable functions with locally Hölder continuous derivatives on $(0,1)$ and $C^{m,\alpha}\big([0,1]\big)$ has an obvious meaning. In the case $\alpha=1$ and $m=0$, $C^{0,1}\big([0,1]\big)=W^{1,\infty}(0,1)=:\mathrm{Lip}(0,1)$ is the space of  Lipschitz-continuous functions where our notion makes implicit use of the fact that these functions posses a Lipschitz continuous extension to the boundary. We further would like to remark that some authors prefer to write $AC(0,1)$ in place of $W^{1,1}(0,1)$ referring to the more classical notion of ``absolutely continuous'' functions forming a proper subspace of $C^{0}\big([0,1]\big)$, see e.g. \cite{BGH}, Chapter 2. Finally, $BV(0,1)$ denotes the space of functions of bounded variation on $(0,1)$, i.e. the set of all functions $u\in L^1(0,1)$ whose distributional derivative $Du$ can be represented by a signed Radon measure of finite total mass $\int\limits_{0}^{1}{|Du|}$. For more information concerning these spaces the reader is referred to the monographs \cite{Giu} and \cite{AFP}.\\
Due to \cite{AFP}, Theorem 3.28, p. 136 (see also section 2.3 on p. 90 in \cite{BGH}), there is always a ``good`` representative of a $BV$-function $u$ which is continuous up to a countable set of jump points $\{x_k\}\subset(0,1)$, $k\in\N$, i.e. in particular,  the left- and the right limit exist at all points. In what follows, we will tacitly identify any $BV$-function with this particular representative. We further note that the classical derivative of this good representative, which we denote by $\dot{u}$, exists at almost all points (see \cite{AFP}, Theorem 3.28, p.136 once again). The measure $Du$  can then be decomposed into the following sum
\begin{align}
\label{Du} Du=\overset{\substack{\mbox{$=:D^au$}\\ \vspace{-0.15cm} \\ \overbrace{\hspace{1.2cm}}}}{\dot{u}\mathcal{L}^1}+\overset{\mbox{$=:D^su$}}{\overbrace{\sum_{k\in\N}h(x_k)\delta_{x_k}+D^cu}},
\end{align}
and it holds (compare \cite{AFP}, Corollary 3.33)
\[
 |Du|(0,1)=\int\limits_{0}^{1}{|\dot{u}|dt}+\sum\limits_{k\in\N}{|h(x_k)|}+|D^cu|(0,1).
\]
Here, $h(x_k):=\lim\limits_{x\downarrow x_k}u(x)-\lim\limits_{x\uparrow x_k}u(x)$ denotes the "jump-height"  and $\delta_{x_k}$ is Dirac's measure  at $x_k$. The sum $\sum_{k\in\N}h(x_k)\delta_{x_k}$ is  named the \textit{jump part} $D^ju$ of $Du$ which, together with the so called \textit{Cantor part} $D^cu$ forms the \textit{singular part} $D^su$. Furthermore, $\dot{u}\mathcal{L}^1$ is the \textit{absolutely continuous part} $D^au$ w.r.t. the measure $\mathcal{L}^1$ and $D^au+D^su$ is the Lebesgue decomposition of $Du$. 

Coming back to the subject of our investigation we put problem (\ref{model}) in a more precise form, i.e. we consider the minimization problem:
\begin{align}
\label{J} J[w]:=\int\limits_{0}^{1}{F(\dot{w})\dt}+ \frac{\lambda}{2}\int\limits_{0}^{1}{(w-f)^2\dt}\rightarrow\text{min in}\,\,W^{1,1}(0,1)
\end{align}
for a density $F$ satisfying (\ref{F1})-(\ref{F4}), in particular $F$ is of linear growth. Hence, the Sobolev space $W^{1,1}(0,1)$ is the natural domain of  $J$ . However, due to the non-reflexivity of this space we can in general not expect to find a solution. Following ideas in \cite{BF1}, we therefore pass to a relaxed version $K$ of the above functional which is defined for $w\in BV(0,1)$ and takes a particularly simple form in our one-dimensional setting, which means that we replace (\ref{J}) by the problem
\begin{align}
\label{K}\begin{split} 
K[w]:=\intop_0^1 F(\dot{w})\dt+\lambda_\infty|D^sw|(0,1)+&\frac{\lambda}{2}\intop_{0}^1(w-f)^2\dt\\
&\rightarrow\text{min in}\,\,BV(0,1).
\end{split}
\end{align}
We would like to note, that the above formula coincides with the usual notion of relaxation in $BV$ (cf. \cite{AFP}, Theorem 5.47 on p. 304) since under the assumptions imposed on $F$ the recession function $F^\infty(p):=\lim\limits_{t\rightarrow\infty}F(tp)/t$  simplifies to $F^\infty(p)= \lambda_\infty|p|$.

From the point of view of regularity, $BV$-minimizers (i.e. solutions of problem (\ref{K})) are not very popular. However, it turns out that if we (strongly) restrict the size of the free parameter $\lambda$ it is possible to establish existence of a unique $J$-minimizer $u$ in the space $W^{1,1}(0,1)$. Part a) of the following theorem is concerned with this issue whereas in part b) we show that the minimizer of the relaxed variant (\ref{K}) in the space $BV(0,1)$ is exactly the solution $u$ from part a). Part c) is devoted to the regularity behaviour of the $J$-minimizer $u$. Here we can prove optimal regularity, which in this context means that $u$ is of class $C^{1,1}$ on the interval $[0,1]$. Furthermore, it turns out that $u$ solves a Neumann-type two-point boundary value problem. Precisely we have
\begin{Satz}[full regularity for small values of $\lambda$]\label{Thm1.1}
Suppose that $0\leq f\leq1$ a.e. on $[0,1]$ and that the density $F$ satisfies (\ref{F1})--(\ref{F4}). We further assume that the parameter $\lambda$ satisfies
\begin{align}
 \label{criticallambda} \lambda<\lambda_{\infty}(F)
\end{align}
with $\lambda_{\infty}(F)$ defined in (\ref{lambdainf}). Then it holds:
\begin{enumerate}[a)]
\item  Problem (\ref{J}) admits a unique solution $u\in W^{1,1}(0,1)=AC(0,1)$ and this solution satisfies $0\leq u(x)\leq 1$ for all $x\in[0,1]$.
\item The relaxation \grqq $K\rightarrow\text{min in}\,\, BV(0,1)$\grqq\,\,has just one solution which coincides with $u$ from part a).
 \item The minimizer $u$ belongs to the class $W^{2,\infty}(0,1)=C^{1,1}\big([0,1]\big)$ and solves the following Neumann-type boundary value problem 
\begin{align}\tag{BVP}\label{ode}
  \left\{\begin{aligned}
   &\ddot{u}=\lambda \frac{u-f}{F''(\dot{u})}\text{ a.e. on }(0,1),\\
   &\dot{u}(0)=\dot{u}(1)=0.
  \end{aligned}\right.
\end{align}
\end{enumerate}
\end{Satz}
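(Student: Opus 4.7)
The plan is to treat all three parts in a unified way via a viscosity regularization. For $\delta>0$ I first consider the strictly convex, $W^{1,2}$-coercive functional $J_\delta[w]:=J[w]+\delta\int_0^1\dot w^2\,dt$, which by the direct method has a unique minimizer $\ud\in W^{1,2}(0,1)$. A truncation argument (replace $\ud$ by $v:=\max(0,\min(\ud,1))$; both $\int F(\dot w)\,dt$ and $\delta\int\dot w^2\,dt$ are non-increased on the truncation set while $\int(w-f)^2\,dt$ strictly decreases unless $\ud=v$, using $0\le f\le 1$) gives $0\le\ud\le 1$.

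Since no boundary values are prescribed, the Euler-Lagrange equation together with the natural (Neumann) boundary conditions reads
\[
\frac{d}{dt}\bigl[F'(\dot\ud)+2\delta\dot\ud\bigr]=\lambda(\ud-f),\qquad \bigl[F'(\dot\ud)+2\delta\dot\ud\bigr](0)=\bigl[F'(\dot\ud)+2\delta\dot\ud\bigr](1)=0,
\]
and integration from $0$ to $t$ gives $F'(\dot\ud(t))+2\delta\dot\ud(t)=\lambda\int_0^t(\ud-f)\,d\tau$, whose modulus is at most $\lambda$. The hard part is to convert this into a $\delta$-uniform $L^\infty$-bound on $\dot\ud$, and this is exactly where $\lambda<\lambda_\infty$ is used: $F'$ is odd (by (\ref{F1})) and strictly increasing (by (\ref{F4})), so $F'(\dot\ud)$ and $2\delta\dot\ud$ share the sign of $\dot\ud$; each summand therefore has the same sign as the total, forcing $|F'(\dot\ud(t))|\le\lambda<\lambda_\infty$ pointwise. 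Since $F'$ is a strictly increasing bijection $\R\to(-\lambda_\infty,\lambda_\infty)$, this yields the $\delta$-independent bound $|\dot\ud|\le M:=(F')^{-1}(\lambda)$.

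With this bound in hand, Arzel\`a-Ascoli and weak-$\ast$ compactness extract a subsequence $\ud\to u$ uniformly with $\dot\ud\rightharpoonup\dot u$ weakly-$\ast$ in $L^\infty$, so $u\in W^{1,\infty}(0,1)$, $|\dot u|\le M$, $0\le u\le 1$. Lower semicontinuity of $w\mapsto\int F(\dot w)\,dt$ under weak $L^1$-convergence combined with $J[u]\le\liminf_{\delta\to 0}J_\delta[\ud]\le\liminf_{\delta\to 0}J_\delta[w]=J[w]$ for any fixed $w\in W^{1,2}$ (and then any $w\in W^{1,1}$ by density) shows $u$ solves (\ref{J}); strict convexity of the $L^2$ fidelity term forces uniqueness. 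This settles (a). For (b), by the relaxation characterization of $K$ as the $L^1$-lsc envelope of $J$ (cf.\ the remark after (\ref{K})), any $w\in BV(0,1)$ admits $w_n\in W^{1,1}$ with $J[w_n]\to K[w]$, so $K[w]\ge\inf_{W^{1,1}}J=J[u]=K[u]$, with equality forcing $w=u$ again by strict convexity.

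For part (c), passing to the limit $\delta\to 0$ in the integrated Euler-Lagrange relation yields
\[
F'(\dot u(t))=\lambda\int_0^t(u-f)\,d\tau\ \text{ a.e.,}\qquad F'(\dot u(0))=F'(\dot u(1))=0.
\]
The right-hand side is Lipschitz in $t$ with values in $[-\lambda,\lambda]\subset(-\lambda_\infty,\lambda_\infty)$, and since $F''\ge c_0>0$ on the compact interval $[-M,M]$ by continuity of $F''$, the inverse $(F')^{-1}$ is Lipschitz on $[-\lambda,\lambda]$. Hence $\dot u=(F')^{-1}(F'(\dot u))$ is itself Lipschitz, so $u\in W^{2,\infty}(0,1)=C^{1,1}([0,1])$. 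Differentiating recovers the ODE $\ddot u=\lambda(u-f)/F''(\dot u)$ a.e., while $F'(\dot u(0))=0=F'(0)$ together with the strict monotonicity of $F'$ gives $\dot u(0)=0$, and analogously $\dot u(1)=0$, yielding (\ref{ode}).
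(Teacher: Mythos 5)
Your proposal is correct and follows essentially the same route as the paper: vanishing-viscosity regularization $J_\delta$, the dual quantity $\sigma_\delta=F_\delta'(\dot u_\delta)$ with $\dot\sigma_\delta=\lambda(u_\delta-f)$, $\sigma_\delta(0)=\sigma_\delta(1)=0$ and hence $\|\sigma_\delta\|_\infty\leq\lambda<\lambda_\infty$, which via the inverse of $F'$ gives the $\delta$-uniform Lipschitz bound, followed by the same limit and relaxation arguments. Your sign argument for $|F'(\dot u_\delta)|\leq|\sigma_\delta|$ is a slightly slicker version of the paper's Lemma \ref{inversefunction}, and your derivation of $C^{1,1}$-regularity from $\dot u=(F')^{-1}(\sigma)$ with $\sigma$ Lipschitz is a clean equivalent of the paper's passage of uniform $C^{1,1}$-bounds to the limit.
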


\begin{Rem}\label{optimallambda}
The bound (\ref{criticallambda}) on the parameter $\lambda$ occurs for technical reasons since it allows us to prove a general statement on the solvability of problem (\ref{J}). In practice, this threshold strongly depends on the data function $f$ as well and as numerical experiments suggest, often exceeds $\lambda_\infty$. In Theorems \ref{Thm1.4} and \ref{Thm1.3} we will determine better estimates for $\lambda$ under which we can expect solvability of (\ref{J}) whereas Theorem \ref{Thm1.5} proves, that the statement of Theorem \ref{Thm1.1} is indeed only true for a restricted range of $\lambda$. 
\end{Rem}

Next we drop the bound (\ref{criticallambda}) and pass to the relaxed variational problem (\ref{K}).
\begin{Satz}[partial regularity for arbitrary values of $\lambda$]\label{Thm1.2}
Suppose that $0\leq f\leq1$ a.e. on $[0,1]$ and let the density $F$ satisfy (\ref{F1})--(\ref{F4}) as well as the additional requirement
\begin{align}
\label{F5}\tag{F6} F''(p)\leq c_2\frac{1}{1+|p|}
\end{align}
for all $p\in\R$, where $c_2>0$ is a constant. Moreover, let $\lambda>0$ denote any number. Then it holds:
\begin{enumerate}[a)] 
 \item Problem (\ref{K}) admits a unique solution $u\in BV(0,1)$ satisfying $0\leq u\leq 1$ a.e. 
\item There is an open subset $\mathrm{Reg}(u)$ of $(0,1)$ such that $u\in W^{2,\infty}_{\mathrm{loc}}(\mathrm{Reg}(u))$ and $\mathcal{L}^1\big((0,1)-\mathrm{Reg}(u)\big)=0$. We have 
\[
 \mathrm{Reg}(u):=\{s\in(0,1): s\,\,\text{is a Lebesgue point of}\,\,\dot{u}\},
\]
where $\dot{u}$ is defined in (\ref{Du}). Moreover, there are numbers $0<t_1\leq t_2<1$ such that $u\in C^{1,1}\big([0,t_1]\cup[t_2,1]\big)$.
\item If there is a subinterval $(a,b)\subset(0,1)$ such that $f\in W^{1,2}_{\mathrm{loc}}(a,b)$, then $u\in W^{1,1}(a,b)\cap W^{1,2}_{\mathrm{loc}}(a,b)$. In case $(a,b)=(0,1)$ we get that $u\in W^{1,1}(0,1)\cap W^{1,2}_{\mathrm{loc}}(0,1)$ is $J$-minimizing in $W^{1,1}(0,1)$.
\end{enumerate}
\end{Satz}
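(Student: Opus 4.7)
I would apply the direct method in $BV(0,1)$. Convexity and $F'(p)\to\lambda_\infty$ give $F(p)\ge\lambda_\infty|p|-c$, so $K$ is coercive, and the formula defining $K$ is exactly the weak$^*$-$BV$ relaxation of $J$ from $W^{1,1}$ (cf.\ \cite{AFP}, Theorem 5.47), hence lower semicontinuous. Existence follows by compactness, and strict convexity of the fidelity term yields uniqueness. The bound $0\le u\le 1$ is obtained via the truncation $\tilde u:=\max(\min(u,1),0)$: since $0\le f\le 1$, each of the three parts of $K$ decreases (or is preserved) under truncation, forcing $u=\tilde u$.

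\textbf{Part (b).} I would follow the quadratic-regularization strategy of \cite{BF1}: for $\delta>0$ set $F_\delta(p):=F(p)+\tfrac{\delta}{2}p^2$ and let $u_\delta\in W^{1,2}(0,1)$ be the unique minimizer of the associated functional (with $0\le u_\delta\le 1$ by truncation). Its Euler-Lagrange equation integrates to
\begin{equation*}
 V_\delta(t):=F'(\dot u_\delta(t))+\delta\dot u_\delta(t)=\lambda\int_0^t(u_\delta-f)\,ds,\quad \dot u_\delta(0)=\dot u_\delta(1)=0,
\end{equation*}
so the $V_\delta$ are uniformly $\lambda$-Lipschitz with $V_\delta(0)=V_\delta(1)=0$. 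A standard comparison argument (testing $u_\delta$ against $W^{1,2}$-approximations of the $K$-minimizer $u$ from (a)) shows $u_\delta\to u$ in $L^1$ and $V_\delta\to V(t):=\lambda\int_0^t(u-f)\,ds$ uniformly, with $|V|\le\lambda_\infty$ since $F'$ maps onto $(-\lambda_\infty,\lambda_\infty)$. On the open set $G:=\{|V|<\lambda_\infty\}$, the inverse $\Psi_\delta$ of $p\mapsto F'(p)+\delta p$ converges locally uniformly on $(-\lambda_\infty,\lambda_\infty)$ to $(F')^{-1}$; hence $\dot u_\delta=\Psi_\delta(V_\delta)$ is uniformly bounded on compact subsets of $G$. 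In the limit $\dot u=(F')^{-1}(V)$ on $G$, and together with (F6) this gives $u\in W^{2,\infty}_\mathrm{loc}(G)$. Continuity of $V$ with $V(0)=V(1)=0$ produces $t_1,t_2\in(0,1)$ with $[0,t_1]\cup[t_2,1]\subset G$, whence $u\in C^{1,1}([0,t_1]\cup[t_2,1])$ with Neumann data $\dot u(0)=\dot u(1)=0$ inherited from the approximation. Since $\dot u|_G$ is continuous, every point of $G$ is a Lebesgue point of $\dot u$, so $G\subset\mathrm{Reg}(u)$; conversely, at $s\notin G$ either $\dot u$ is unbounded near $s$ (precluding the Lebesgue-point property) or $V\equiv\pm\lambda_\infty$ on a neighborhood and $u=f$ a.e.\ there, which identifies $\mathrm{Reg}(u)$ with $G$ up to a null set (and $\mathcal{L}^1((0,1)\setminus\mathrm{Reg}(u))=0$ is in any case automatic from $\dot u\in L^1$).

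\textbf{Part (c).} On a subinterval $(a,b)$ with $f\in W^{1,2}_\mathrm{loc}$, I would derive a uniform $W^{1,2}_\mathrm{loc}$-estimate for $u_\delta$ from the differentiated EL $\tfrac{d}{dt}[F''_\delta(\dot u_\delta)\ddot u_\delta]=\lambda(\dot u_\delta-\dot f)$ by testing with $\eta^2\dot u_\delta$ for $\eta\in C_c^\infty(a,b)$. Integration by parts and Young's inequality yield
\begin{equation*}
 \tfrac{\lambda}{2}\int\eta^2\dot u_\delta^2\,dt+\tfrac{1}{2}\int\eta^2 F''_\delta(\dot u_\delta)\ddot u_\delta^2\,dt\le C(\|\eta'\|_\infty,\|\dot f\|_{L^2(\mathrm{supp}\,\eta)}),
\end{equation*}
the cross term being absorbed via (F6) (which gives $F''(p)p^2\le c_2|p|$) combined with the uniform $BV$-bound on $u_\delta$ and the energy bound $\delta\int\dot u_\delta^2\le C$. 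Passing $\delta\to 0$ yields $\dot u\in L^2_\mathrm{loc}(a,b)$, so $D^su|_{(a,b)}=0$ and $u\in W^{1,1}(a,b)\cap W^{1,2}_\mathrm{loc}(a,b)$. For $(a,b)=(0,1)$ we have $u\in W^{1,1}(0,1)$ with $K[u]=J[u]$; since $J[v]\ge K[v]\ge K[u]=J[u]$ for all $v\in W^{1,1}\subset BV$, $u$ is $J$-minimizing. The \emph{main obstacle} is the precise identification of $\mathrm{Reg}(u)$ with $G$ in part (b): controlling the behavior on the saturation set $\{|V|=\lambda_\infty\}$ and showing no hidden concentration of $\dot u_\delta$ survives in the limit is the delicate point.
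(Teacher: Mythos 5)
Your proposal follows essentially the same route as the paper: the same quadratic regularization $F_\delta(p)=F(p)+\frac{\delta}{2}p^2$, the same conjugate variable (your $V_\delta$ is precisely the paper's $\sigma_\delta=F_\delta'(\dot u_\delta)$, with $\dot\sigma_\delta=\lambda(u_\delta-f)$ and $\sigma_\delta(0)=\sigma_\delta(1)=0$), the same localization on the set where $|\sigma|$ stays below $\lambda_\infty$ for part b), and for part c) the identical Caccioppoli-type estimate obtained by testing the differentiated Euler equation with $\eta^2\dot u_\delta$ and exploiting $F_\delta''(p)p^2\leq\delta p^2+c_2|p|$. Two points should be corrected. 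First, in part b) you invoke (\ref{F5}) to conclude $u\in W^{2,\infty}_{\mathrm{loc}}(G)$; this is not what is needed there. To bound $\ddot u_\delta=\lambda(u_\delta-f)/F_\delta''(\dot u_\delta)$ on a compact subset of $G$ one needs $F_\delta''(\dot u_\delta)$ bounded \emph{below}, which follows from (\ref{F4}), the continuity of $F''$ and the uniform bound $|\dot u_\delta|\leq L$ obtained from Lemma \ref{inversefunction}; the upper bound (\ref{F5}) is irrelevant here, and indeed the paper stresses that (\ref{F5}) enters only in part c). (Incidentally, the inequality $F(p)\geq\lambda_\infty|p|-c$ claimed in your part a) is false in general --- e.g. for $\Phi_\mu$ with $1<\mu\leq 2$ one has $\lambda_\infty|p|-F(p)\to\infty$ --- but coercivity already follows from (\ref{F3}).)

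The genuine gap, which you flag yourself, is the identification $\mathrm{Reg}(u)=G$. Your argument gives $G\subset\mathrm{Reg}(u)$ and the relation $\dot u=(F')^{-1}(V)$ \emph{only on} $G$, via locally uniform convergence. The converse inclusion requires knowing $\sigma=F'(\dot u)$ a.e.\ on all of $(0,1)$: without it you cannot exclude, say, an interval on which $|V|\equiv\lambda_\infty$ and $u=f$ a.e.\ with $f$ regular enough that every point is a Lebesgue point of $\dot u$ --- such an interval would lie in $\mathrm{Reg}(u)\setminus G$ and your argument yields no $W^{2,\infty}$-bound there. The paper closes this by quoting the duality theory of \cite{FT}: $\sigma$ is the unique dual solution and $\sigma=F'(\dot u)$ a.e., where $\dot u$ is the density of $D^au$ (formula (\ref{Thm122})); since $|F'|<\lambda_\infty$ pointwise this forces $|\sigma|<\lambda_\infty$ a.e.\ and places every Lebesgue point of $\dot u$ (outside a null set) inside $G$. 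You should either import this duality identity or prove the a.e.\ limit identification $V=F'(\dot u)$ directly (e.g.\ by a monotonicity/Minty argument) before the equality $\mathrm{Reg}(u)=G$ is established.
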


\begin{Rem}
\begin{enumerate}[(i)]
 \item Note that we need (\ref{F5}) only for proving part c). Parts a) and b) remain valid without (\ref{F5}).
 \item The requirement (\ref{F5}) is not as restrictive as it may appear at first sight. In particular, it is easy to confirm that for a given $\eps>0$ and $\mu>1$ our examples from the introduction, $F(p):=F_{\eps}(p)$ and $F(p):=\Phi_{\mu}(|p|)$ satisfy condition (\ref{F5}).
\end{enumerate}
\end{Rem}

Since signals in practice are usually modeled by more regular functions rather than just through measurable ones (we have e.g. rectangular- or 'sawtooth'-like signals in mind, which are differentiable outside a small exceptional set), it is reasonable to ask to what extend these properties are reproduced by the $K$-minimizer $u$. The next theorem shows how the results from Theorem \ref{Thm1.2} can be improved if we assume better data.

\begin{Satz}[regularity for special data]\label{Thm1.4}
Suppose that the density $F$ satisfies (\ref{F1})--(\ref{F4}), assume $0\leq f\leq 1$ a.e. on $[0,1]$ and let $u$ be the $K$-minimizer from Theorem \ref{Thm1.2}.
\begin{enumerate}[a)]
\item Let $t_0\in (0,1)$ be a point, where some representative of the data function $f$ is continuous. Then the good representative of $u$ introduced in front of (\ref{Du}) is continuous at $t_0$.
\item Assume that there is an interval $[a,b]\subset (0,1)$ such that $f\in \mathrm{Lip}(a,b)=W^{1,\infty}(a,b)$. Then we have $u\in C^2(a,b)$.
\item Suppose $f\in W^{1,1}(0,1)$ and define 
\begin{align}\label{omegainfty}
 \omega_\infty:=\lim_{p\rightarrow\infty}pF'(p)-F(p)\in (0,\infty].
\end{align}
Then, if $\lambda\left(\frac{1}{2}+\|\dot{f}\|_1\right)<\omega_\infty$, it follows $u\in C^{1,1}\big([0,1]\big)$.
\end{enumerate}
\end{Satz}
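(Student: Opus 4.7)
The plan is to prove parts a), b), c) in order, each building on the previous and on Theorem \ref{Thm1.2}. The dual variable $\phi:=F'(\dot{u})\in[-\lambda_{\infty},\lambda_{\infty}]$ and the Legendre quantity $H(p):=pF'(p)-F(p)$---which is even, nondecreasing in $|p|$, and satisfies $H(p)\to\omega_{\infty}$ as $|p|\to\infty$---play the central role; formally $\dot{\phi}=\lambda(u-f)$ and $\frac{d}{dt}H(\dot{u})=\dot{u}F''(\dot{u})\ddot{u}=\lambda\dot{u}(u-f)$.

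For part a), I argue by contradiction. Assume the good representative of $u$ jumps at $t_0$, with $u(t_0^-)=a\neq b=u(t_0^+)$; WLOG $a<b$, and set $c:=f(t_0)$. For $\delta>0$ small and $\sigma\in[0,b-a]$, consider the competitor $v:=u+\sigma$ on $(t_0-\delta,t_0)$ and $v:=u$ elsewhere. Then the jump of height $b-a$ at $t_0$ is replaced by jumps of heights $b-a-\sigma$ at $t_0$ and $\sigma$ at $t_0-\delta$, with the same total magnitude, while $\int F(\dot{v})=\int F(\dot{u})$. Hence $\Delta K = \frac{\lambda}{2}\int_{t_0-\delta}^{t_0}[2\sigma(u-f)+\sigma^{2}]\dt$, and by continuity of $f$ at $t_0$,
\[
\Delta K/\delta \longrightarrow \frac{\lambda}{2}\bigl[2\sigma(a-c)+\sigma^{2}\bigr] \quad \text{as}\,\,\delta\to 0.
\]
Choosing $\sigma$ so as to move $u$ toward $f$ in the $L^{2}$-sense---$\sigma=c-a$ when $c\in(a,b]$, $\sigma=b-a$ when $c>b$, and the analogous right-side shift on $(t_0,t_0+\delta)$ in the remaining cases---makes this limit strictly negative, contradicting the $K$-minimality of $u$.

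For part b), part a) first gives $u$ continuous on $(a,b)$. On $\mathrm{Reg}(u)\cap(a,b)$ (open, dense in $(a,b)$) Theorem \ref{Thm1.2} b) gives $u\in W^{2,\infty}_{\mathrm{loc}}$ and the classical Euler-Lagrange equation $\dot{\phi}=\lambda(u-f)$; since the right-hand side is continuous on $(a,b)$, $\phi$ is locally Lipschitz and extends to a $C^{1}$ function on $(a,b)$ with values in $[-\lambda_{\infty},\lambda_{\infty}]$. The key step is to show $|\phi|<\lambda_{\infty}$ pointwise on $(a,b)$: suppose $\phi(t_0)=\lambda_{\infty}$. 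Since $t_0$ is a maximum, $\dot{\phi}(t_0)=0$ gives $u(t_0)=f(t_0)$, and $\dot{\phi}(t)\geq 0$ for $t<t_0$ near $t_0$ gives $u\geq f$ there. But $\phi(t_0)=\lambda_{\infty}$ forces $\dot{u}(t)\to+\infty$ as $t\to t_0$, so $u$ is strictly increasing near $t_0$, whence $u(t)<u(t_0)=f(t_0)\approx f(t)$ for $t$ slightly less than $t_0$, contradicting $u\geq f$. The case $\phi(t_0)=-\lambda_{\infty}$ is symmetric. A compactness argument then yields $|\phi|\leq\lambda_{\infty}-\eta$ on each compact subinterval of $(a,b)$, so $\dot{u}=(F')^{-1}(\phi)$ is continuous and locally bounded; the EL equation $\ddot{u}=\lambda(u-f)/F''(\dot{u})$ finally yields $u\in C^{2}(a,b)$.

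For part c), the heart of the proof is the Legendre identity $\frac{d}{dt}H(\dot{u})=\lambda\dot{u}(u-f)=\frac{\lambda}{2}\frac{d}{dt}(u-f)^{2}+\lambda(u-f)\dot{f}$. Integrating from $0$ to $t$, invoking the Neumann condition $\dot{u}(0)=0$ (consequence of the $C^{1,1}$-regularity near the boundary in Theorem \ref{Thm1.2} b) combined with the EL equation for $K$) so that $H(\dot{u}(0))=0$, and using $|u-f|\leq 1$, one obtains
\[
H(\dot{u}(t))\leq \frac{\lambda}{2}+\lambda\|\dot{f}\|_{1}<\omega_{\infty}
\]
by hypothesis. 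Since $H(\dot{u})\to\omega_{\infty}$ as $|\dot{u}|\to\infty$, this forces a uniform bound $|\dot{u}|\leq M<\infty$, whence $u\in W^{1,\infty}(0,1)$; the EL equation then yields $\ddot{u}\in L^{\infty}$, and $u\in W^{2,\infty}(0,1)=C^{1,1}([0,1])$. The main technical obstacle is that this calculation presumes enough smoothness of $u$ to differentiate $H(\dot{u})$ pointwise, whereas $u$ is a priori only $BV$ with local $W^{2,\infty}$-regularity on $\mathrm{Reg}(u)$; I would circumvent this by mollifying $f$ into $f_{\epsilon}:=f\ast\rho_{\epsilon}$ (preserving $0\leq f_{\epsilon}\leq 1$ and $\|\dot{f}_{\epsilon}\|_{1}\leq\|\dot{f}\|_{1}$), applying part b) to the corresponding minimizers $u_{\epsilon}$ (which are then $C^{2}$ in the interior) to perform the above calculation uniformly in $\epsilon$, and passing to the limit.
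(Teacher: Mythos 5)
Your three arguments track the paper's own proof closely. Part a) is the paper's competitor construction (shift $u$ by a constant toward $f$ on a small one-sided neighbourhood of the jump point; the jump mass is redistributed but not increased, while the fidelity term strictly decreases). Part c) is, up to a Legendre transform, exactly the paper's computation: your identity $\frac{d}{dt}H(\dot u)=\frac{\lambda}{2}\frac{d}{dt}(u-f)^2+\lambda(u-f)\dot f$ with $H(p)=pF'(p)-F(p)$ is the primal form of the paper's dual identity for $F^*(\sigma)$ (recall $F^*(\sigma)=H\big((F')^{-1}(\sigma)\big)$ and $\dot\sigma=\lambda(u-f)$), and both yield the same bound $\omega_\infty\le\lambda\big(\frac12+\|\dot f\|_1\big)$ at a putative singular point. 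One structural remark on c): the mollification of $f$ you propose is an unnecessary detour, and would itself require a stability argument showing $u_\epsilon\to u$ together with persistence of the uniform gradient bound in the limit. The paper avoids the issue entirely by integrating the identity only over $[0,s]$ with $s<\hat t$, where $\hat t$ is the \emph{first} singular point: on $[0,\hat t)$ the regularity needed to justify the computation is already supplied by Theorem \ref{Thm1.2} b) (which gives $C^{1,1}$ up to $t=0$ and local $C^{1,1}$ on $\mathrm{Reg}(u)\supset[0,\hat t)$), and the contradiction is obtained by letting $s\uparrow\hat t$, where $\dot u$ blows up and hence $H(\dot u)\to\omega_\infty$.

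Part b) contains a genuine logical slip. From the fact that $\phi$ attains its maximum $\lambda_\infty$ at $t_0$ you infer that $\dot\phi(t)\ge 0$ for $t<t_0$ near $t_0$, hence $u\ge f$ there. A $C^1$ function need not have one-signed derivative on any left neighbourhood of an interior maximum, so this step is unjustified as written. Fortunately the other half of your own argument already supplies the fix: you correctly show (from $\dot u\to+\infty$, $|\dot f|\le L$ and $u(t_0)=f(t_0)$) that $u<f$ on some interval $(t_0-\eps,t_0)$; from this, either $\dot\phi=\lambda(u-f)<0$ there forces $\phi>\phi(t_0)=\lambda_\infty$ on $(t_0-\eps,t_0)$, contradicting $|\phi|\le\lambda_\infty$, or (the paper's route) $\ddot u=\lambda(u-f)/F''(\dot u)<0$ there makes $\dot u$ decreasing on $(t_0-\eps,t_0)$, contradicting $\dot u\to+\infty$. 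With that one step replaced, your proof of b) coincides with the paper's.
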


\begin{Cor} \label{Coro} If the data function $f$ is globally Lipschitz-continuous on $[0,1]$, then it follows $u\in C^2\big([0,1]\big)$.
\end{Cor}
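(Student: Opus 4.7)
The plan is to combine Theorem \ref{Thm1.4}(b), which gives interior $C^2$-regularity on all of $(0,1)$, with the boundary $C^{1,1}$-regularity of Theorem \ref{Thm1.2}(b), and then upgrade the boundary regularity to $C^2$ by invoking the Euler--Lagrange equation on small one-sided neighborhoods of $0$ and $1$.

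First, since $f$ is Lipschitz on the whole of $[0,1]$, the hypothesis of Theorem \ref{Thm1.4}(b) is met on every closed subinterval $[a,b]\subset(0,1)$. That theorem yields $u\in C^2(a,b)$ for each such interval, and taking the union produces $u\in C^2(0,1)$. To handle the endpoints, Theorem \ref{Thm1.2}(b) provides numbers $0<t_1\leq t_2<1$ such that $u\in C^{1,1}\big([0,t_1]\cup[t_2,1]\big)$. Consequently $\dot u$ is continuous and bounded on the two boundary intervals, so by (\ref{F1}) and (\ref{F4}), $F''(\dot u)$ is continuous and strictly positive there; and $u-f$ is continuous on these intervals thanks to the Lipschitz assumption on $f$.

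It remains to show that $u$ satisfies a pointwise Euler--Lagrange equation on $[0,t_1]$ and $[t_2,1]$. Since $u\in W^{1,\infty}(0,t_1)$ the singular part of $Du$ vanishes on $(0,t_1)$, so the $|D^sw|$-term in $K$ contributes nothing to first variations against test functions supported in $[0,t_1)$, and the standard calculation produces
\[
\frac{d}{dt}F'(\dot u)=\lambda(u-f)\qquad\text{a.e.\ on }(0,t_1).
\]
Because $\dot u$ is Lipschitz, the chain rule gives $F''(\dot u)\ddot u=\lambda(u-f)$ a.e., and dividing by the continuous, strictly positive quantity $F''(\dot u)$ produces a continuous representative of $\ddot u$. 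Hence $u\in C^2([0,t_1])$; symmetrically, $u\in C^2([t_2,1])$. Together with the interior statement $u\in C^2(0,1)$ this yields $u\in C^2([0,1])$.

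The main technical obstacle I foresee is the derivation of the Euler--Lagrange equation in a one-sided neighborhood of an endpoint: one must verify that variations whose support reaches up to $0$ or $1$ are admissible in problem (\ref{K}) without producing uncontrolled boundary terms. This hinges on the absence of boundary constraints in $K$ and on the $C^{1,1}$-smoothness of $u$ near the endpoint, which together allow an integration by parts with vanishing boundary contributions. Once this is settled, the remainder reduces to the elementary ODE observation above.
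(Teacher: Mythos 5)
Your argument is correct and follows essentially the same route as the paper: apply Theorem \ref{Thm1.4} b) with $a,b$ approaching the endpoints to get $u\in C^2(0,1)$, use Theorem \ref{Thm1.2} b) for $C^{1,1}$-regularity up to $t=0$ and $t=1$, and then read off continuity of $\ddot u$ up to the boundary from the Euler--Lagrange equation, whose right-hand side $\lambda(u-f)/F''(\dot u)$ is continuous on $[0,1]$. The only cosmetic difference is that you re-derive the equation on one-sided neighborhoods of the endpoints, whereas the paper simply uses the equation already valid on all of $(0,1)$ together with the uniform continuity of $\dot u$ on $[0,1]$, so the boundary-variation issue you flag never actually arises.
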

\begin{proof}[Proof of Corollary \ref{Coro}]
Applying Theorem \ref{Thm1.4} b) with $a$ and $b$ arbitrarily close to $0$ and $1$, respectively, yields $u\in C^2(0,1)$. In particular, it is therefore immediate that $u$ satisfies the differential equation from Theorem \ref{Thm1.1} c)
\begin{align}\label{difff}
 \ddot{u}=\lambda\frac{u-f}{F''(\dot{u})}
\end{align}
everywhere on $(0,1)$. Due to Theorem \ref{Thm1.2} b) we have $u\in C^1\big([0,t_1]\cup[t_2,1]\big)$ and therefore $\dot{u}$ is uniformly continuous on $[0,1]$, which means that the right-hand side of equation (\ref{difff}) belongs to the space $C^0\big([0,1]\big)$. Thus $\ddot{u}$ exists even in $0$ and $1$ and is a continuous function on $[0,1]$.
\end{proof}

\begin{Rem}\label{remthm13}
\begin{enumerate}[ (i)]
 \item From part a) we infer that if $f$ is continuous on an interval $(a,b)\subset[0,1]$ , then also $u\in C^0(a,b)$. 
 \item We would like to remark that part b) in particular applies to piecewise affine data functions such as triangular or rectangular signals as shown in figure \ref{fig1} below:
 \begin{figure}[h!]
\centering
  \includegraphics[scale=0.9]{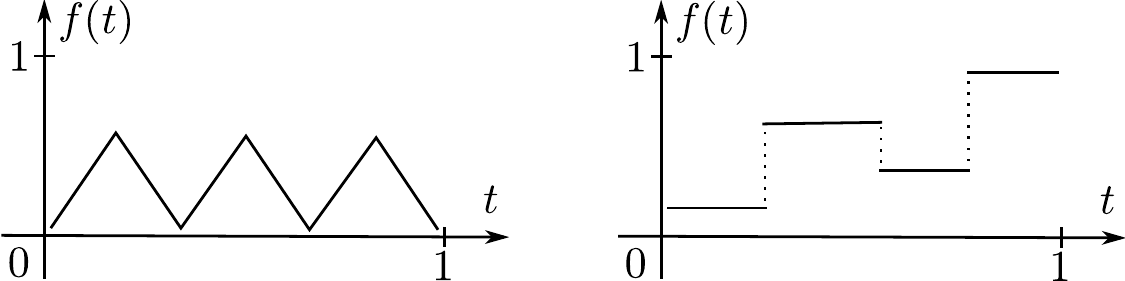}
  \caption{Examples of typical data functions}\label{fig1}
 \end{figure}

 We then obtain differentiability of the corresponding $K$-minimizers outside the set of jump points of the data. In particular, if the data are Lipschitz except for a countable set of jump-type discontinuities, then $K$ attains its minimum in the space $SBV(0,1)$ of special functions of bounded variation (see \cite{AFP}, chapter 4 for a definition). 
 \item The main feature of part c) of Theorem \ref{Thm1.4} is, that even though full $C^{1,1}$-regularity may fail to hold in general if the parameter $\lambda$ exceeds $\lambda_\infty$, it can still hold up to $2\lambda_\infty$ provided the oscillation of the data $f$ is sufficiently small.
If we take for example the regularized graph-length integrand as our density $F$, i.e. $F(p):=F_\eps(p)=\sqrt{\eps^2+p^2}-\eps$, it is easily verified that
\[
 \omega_{\infty}(F_\eps)=\eps.
\]
Consequently, we get full $C^{1,1}$-regularity for all parameters $\lambda$ up to the bound
\[
 \frac{\eps}{\frac{1}{2}+\|\dot{f}\|_1},
\]
which might be larger than $\lambda_\infty(F_\eps)=1$ provided we choose $\eps$ sufficiently large. If we take $F(p)=\Phi_{\mu}(|p|)$ it holds $\lambda_\infty=\frac{1}{\mu-1}$, whereas
\begin{align*}
\lim_{p\rightarrow\infty} p\Phi_\mu'(p)-\Phi_\mu(p)=
\left\{\begin{aligned}
&\frac{1}{\mu-1}\frac{1}{\mu-2},\quad&\mu> 2,\\
&\infty& 1<\mu\leq 2,
\end{aligned}\right.
\end{align*}
so that in particular $\omega_\infty$ is unbounded if we let $\mu$ approach $2$ from above. 
\end{enumerate}
\end{Rem}
Next we would like to demonstrate the sharpness of our previous regularity results, in particular we want to indicate that singular (i.e. discontinuous) minimizers can occur if we pass from Lipschitz signals $f$ studied in Theorem \ref{Thm1.4} (cf. also Corollary \ref{Coro}) to functions $f$ having jumps in some interior points of the interval $[0,1]$. To be precise, we let for $\mu>1$
\begin{align}\label{F=Phi}
F(p)=\Phi_\mu(|p|),\;p\in\R
\end{align}
with $\Phi_\mu$ as defined in (\ref{Phimu}) and recall that for this density we have (compare Example \ref{exmp1})
\begin{align}\label{lambdaphi}
\lambda_\infty=\frac{1}{\mu-1}.
\end{align}
Moreover we define
\begin{align}\label{specialdata}
 f:[0,1]\rightarrow[0,1],\,f(t)=\begin{cases}
       0,\;0\leq t\leq\frac{1}{2},\\
       1,\;\frac{1}{2}<t\leq 1.
      \end{cases}
\end{align}
Then it holds:
\begin{Satz}[existence of discontinuous minimizers]\label{Thm1.5}
Under the assumptions (\ref{F=Phi}) and (\ref{specialdata}) and with parameters $\lambda>0$, $\mu>1$ let $u\in BV(0,1) $ denote the unique solution of problem (\ref{K}) (being of class $C^2\big([0,1]-\{\frac{1}{2}\}\big)$ on account of Theorem \ref{Thm1.4} and an obvious modification of the proof of Corollary \ref{Coro}). Then, if we assume $\mu>2$ and if $\lambda$ satisfies
\begin{align}\label{lambdagrenz}
\lambda>\frac{8}{\mu-2}
\end{align}
it holds
\begin{align*}
\sup_{0\leq t<1/2}<\frac{1}{2}<\inf_{1/2<t\leq 1}u,
\end{align*}
which means that $u$ has a jump discontinuity at $t=1/2$.
\end{Satz}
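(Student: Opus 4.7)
The plan is to exploit the symmetry of the problem together with a Noether-type conservation law on the half-interval $[0,1/2]$, and to obtain a contradiction in the ``continuous'' case by direct comparison with the competitor $w:=f$.

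First I would use $f(t)+f(1-t)=1$ a.e.\ and the evenness of $F$ to note that $\tilde u(t):=1-u(1-t)$ satisfies $K[\tilde u]=K[u]$; uniqueness in Theorem~\ref{Thm1.2}~(a) then forces $\tilde u=u$, so $u(t)+u(1-t)=1$ a.e.\ and, for the good representatives, $u(1/2^-)+u(1/2^+)=1$. On $[0,1/2)$ the ODE $\ddot u=\lambda u/F''(\dot u)$ together with $u\ge 0$ gives $\ddot u\ge 0$, and combined with $\dot u(0)=0$ this forces $u$ to be non-decreasing; the analogous argument applies on $(1/2,1]$. Hence $\sup_{[0,1/2)}u=u(1/2^-)$ and $\inf_{(1/2,1]}u=u(1/2^+)$, and it suffices to rule out the possibility $u(1/2^-)=u(1/2^+)=1/2$.

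Assume for contradiction this continuous case, set $a:=u(0)\in[0,1/2]$ and $P:=F'(\dot u(1/2^-))\in[0,\lambda_\infty]$. The autonomy of the Lagrangian $L=F(\dot u)+\tfrac{\lambda}{2}u^2$ on $(0,1/2)$ gives the conserved Hamiltonian $\dot u F'(\dot u)-F(\dot u)-\tfrac{\lambda}{2}u^2\equiv -\tfrac{\lambda}{2}a^2$, which rearranges to $F(\dot u)+\tfrac{\lambda}{2}u^2=\dot u F'(\dot u)+\tfrac{\lambda}{2}a^2$. Integrating over $(0,1/2)$, applying one integration by parts with $(F'(\dot u))'=\lambda u$, $\dot u(0)=0$ and $u(1/2)=1/2$, and using the symmetric identity $K[u]=2K|_{[0,1/2]}$ produces
\[ K[u]=P-2\lambda\int_0^{1/2}u^2\,dt+\frac{\lambda a^2}{2}. \]
Two sharp bounds now finish the job. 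Integrating $(F'(\dot u))'=\lambda u$ over $(0,1/2)$ gives $P=\lambda\int_0^{1/2}u\,dt$, and the pointwise estimate $u\le 1/2$ on $[0,1/2]$ yields $\int_0^{1/2}u^2\,dt\le\tfrac{1}{2}\int_0^{1/2}u\,dt=P/(2\lambda)$, so $K[u]\ge\lambda a^2/2$. Evaluating the conservation law at $t=1/2^-$, with $G(p):=pF'(p)-F(p)\le\omega_\infty$, yields $\tfrac{\lambda}{2}(\tfrac{1}{4}-a^2)\le\omega_\infty$, hence $a^2\ge\tfrac{1}{4}-\tfrac{2\omega_\infty}{\lambda}$ and consequently $K[u]\ge\tfrac{\lambda}{8}-\omega_\infty$.

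To conclude, for $F=\Phi_\mu$ one directly computes $\omega_\infty=\tfrac{1}{(\mu-1)(\mu-2)}$, so $\lambda_\infty+\omega_\infty=\tfrac{1}{\mu-2}$; the assumption $\lambda>8/(\mu-2)$ therefore gives $\tfrac{\lambda}{8}-\omega_\infty>\lambda_\infty$. Since $f\in BV(0,1)$ is admissible with $K[f]=\lambda_\infty\cdot 1=\lambda_\infty$, we obtain $K[u]>K[f]$, contradicting minimality of $u$ and excluding the continuous case. The hardest step, in my view, is producing a lower bound on $K[u]$ that is sharp enough to match $8/(\mu-2)$: naive approaches (e.g.\ Jensen applied to $\int \Phi_\mu(\dot u)$) give constants that are off, and only the combination of the Noether identity, the integration-by-parts cancellation, and the pointwise bound $u\le 1/2$ used to control $\int u^2$ by $P/(2\lambda)$ produces the clean inequality $K[u]\ge\lambda a^2/2$; together with the conservation-law bound on $a^2$ this hits the threshold exactly via the special identity $\lambda_\infty+\omega_\infty=1/(\mu-2)$ for $\Phi_\mu$.
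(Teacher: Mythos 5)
Your proposal is correct and rests on the same ingredients as the paper's own proof: the point symmetry of $u$ about $(\nicefrac{1}{2},\nicefrac{1}{2})$, the monotonicity on each half, the conservation law $\dot u F'(\dot u)-F(\dot u)=\frac{\lambda}{2}\big(u^2-u(0)^2\big)$, the comparison $K[u]\leq K[f]=\lambda_\infty$, and the identity $\lambda_\infty+\omega_\infty=\frac{1}{\mu-2}$ for $\Phi_\mu$; you merely run the argument in contrapositive form (assuming $u(\nicefrac{1}{2}^-)=\nicefrac{1}{2}$ and deriving $K[u]>K[f]$), whereas the paper directly shows $\sup_{[0,1/2)}u\leq\sqrt{2/(\lambda(\mu-2))}<\nicefrac{1}{2}$. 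The only substantive difference is that your Noether-identity/integration-by-parts derivation of $K[u]\geq\frac{\lambda}{2}u(0)^2$ is an unnecessary detour, since the fidelity term alone gives $K[u]\geq\frac{\lambda}{2}\int_0^1(u-f)^2\,dt\geq\frac{\lambda}{2}u(0)^2$ (using $u\geq u(0)$ on $[0,\nicefrac{1}{2})$ and the symmetry), which is precisely the estimate the paper exploits in the opposite direction to bound $u(0)$ from above.
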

\begin{Rem}\label{remark1}
From our previous works \cite{BF1}, \cite{BF2} and \cite{BFT} we see that for $\mu\in(1,2)$ and any $\lambda>0$ this phenomenon can not occur, i.e. the minimizer $u$ is a regular function. Thus the value $\mu=2$ separates regular from irregular behaviour of the solutions.
\end{Rem}
\begin{Rem}
Assume that $\lambda>0$ is fixed. Then it follows from (\ref{lambdagrenz}) that we can force the minimizer $u$ to create a jump point at $t=\nicefrac{1}{2}$ by choosing $\mu$ sufficiently large.
\end{Rem}
\begin{Rem}
On account of Theorem 1.1 the solution is regular provided that
\[
\lambda<\lambda_\infty\overset{(\ref{lambdaphi})}{=}\frac{1}{\mu-1}.
\]
On the other hand, inequality (\ref{lambdagrenz}) states
\[
\lambda>\frac{8}{\mu-2}=8\left(\frac{1}{\mu-1}+\frac{1}{(\mu-1)(\mu-2)}\right)=8(\lambda_\infty+\omega_\infty),
\]
which suggests that our solution is irregular, if $\lambda$ and $\mu$ are chosen in such a way that $\lambda>8(\lambda_\infty+\omega_\infty)$ (see Corollary \ref{Cor1.2} below).
\end{Rem}
With respect to Theorem \ref{Thm1.5} and Remark \ref{remark1} it remains to discuss the situation for the limit case $\mu=2$, which can be done in a very general form: it turns out that our arguments are valid for all $\mu$-elliptic densities $F$ with exponent $\mu\in(1,2]$ and for arbitrary measurable data $f$ leading to $C^{1,1}$-regularity of minimizers. It should be noted that this in particular implies the smoothness of minimizers in case $1<\mu<2$ without referring to the higher-dimensional results. Precisely we have
\begin{Satz}[regularity for $\mu$-elliptic densities for $1<\mu\leq 2$]\label{Thm1.6}
 Suppose $0\leq f\leq 1$ a.e. on $[0,1]$ and consider a density $F$ with (\ref{F1})-(\ref{muell}). Moreover, fix any number $\lambda>0$. Then, if
\begin{align}\label{muin}
 \mu\in (1,2]
\end{align}
holds, the unique solution $u\in BV(0,1)$ of problem (\ref{K}) is of class $C^{1,1}\big([0,1]\big)$.
\end{Satz}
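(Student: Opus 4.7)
Theorem~\ref{Thm1.2}(a)--(b) already furnishes a unique $K$-minimizer $u\in BV(0,1)$ with $0\leq u\leq 1$, lying in $W^{2,\infty}_{\mathrm{loc}}(\mathrm{Reg}(u))$ and in $C^{1,1}([0,t_1]\cup[t_2,1])$ near the endpoints, so the plan is to upgrade this to $u\in W^{2,\infty}(0,1)=C^{1,1}([0,1])$ by showing $\dot u\in L^\infty(0,1)$ and $D^su\equiv 0$. The hypothesis $\mu\leq 2$ will enter at exactly one point.

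First, I set up a global Lipschitz ``dual stress'' on $[0,1]$. Smooth variations $u+\epsilon\phi$, $\phi\in C^\infty([0,1])$, leave $D^su$ invariant, so $K$-minimality gives $\int F'(\dot u)\dot\phi\,dt+\lambda\int(u-f)\phi\,dt=0$ for every such $\phi$. Thus $(F'(\dot u))'=\lambda(u-f)$ distributionally on $(0,1)$, and $F'(\dot u)$ admits a Lipschitz representative $\sigma:[0,1]\to\R$; testing with $\phi$ not vanishing at the boundary forces $\sigma(0)=\sigma(1)=0$, whence
\begin{equation*}
\sigma(t)=\lambda\int_0^t(u-f)\,ds,\qquad |\sigma(t)|\leq\lambda\min(t,1-t)\leq\lambda/2.
\end{equation*}
By construction $\sigma=F'(\dot u)$ on $\mathrm{Reg}(u)$, so $|\sigma|<\lambda_\infty$ a.e.\ and hence $|\sigma|\leq\lambda_\infty$ on $[0,1]$ by continuity.

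The heart of the argument is to upgrade this to a \emph{strict} bound $\sup_{[0,1]}|\sigma|<\lambda_\infty$, and this is the only step that uses $\mu\leq 2$. Suppose the contrary; by continuity and symmetry let $t^*:=\inf\{t:\sigma(t)=\lambda_\infty\}\in(0,1)$. Lipschitz continuity gives $\sigma(t)\geq\lambda_\infty-\lambda(t^*-t)$ for $t<t^*$, while integrating the $\mu$-elliptic bound $F''(s)\geq c_1(1+|s|)^{-\mu}$ from $p\geq 0$ to $\infty$ produces $\lambda_\infty-F'(p)\geq\tfrac{c_1}{\mu-1}(1+p)^{1-\mu}$. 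Substituting $\sigma=F'(\dot u)$ on $\mathrm{Reg}(u)$ and inverting yields
\begin{equation*}
\dot u(t)\geq C(t^*-t)^{-1/(\mu-1)}-1\qquad\text{for a.e.\ }t\text{ near }t^*,\;t<t^*,
\end{equation*}
with $C>0$. Since $\mu\in(1,2]$ forces $1/(\mu-1)\geq 1$, the right-hand side is not Lebesgue integrable on $(t^*-\delta,t^*)$; hence $\int_{t^*-\delta}^{t^*}\dot u\,dt=+\infty$. Combined with $u(t^*-\epsilon)-u(t^*-\delta)=\int_{t^*-\delta}^{t^*-\epsilon}\dot u\,dt+D^su\bigl((t^*-\delta,t^*-\epsilon)\bigr)$ and $|u|\leq 1$, this forces $|D^su|\bigl((t^*-\delta,t^*-\epsilon)\bigr)\to\infty$ as $\epsilon\to 0^+$, contradicting $\lambda_\infty|D^su|(0,1)\leq K[u]\leq K[0]<\infty$. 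The main obstacle is precisely this step: the divergent absolutely continuous part could \emph{a priori} be cancelled by a large negative singular mass, and it is the finite-energy bound on the minimizer, via its control of $|D^su|(0,1)$, that closes the loop.

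Once $L:=\sup_{[0,1]}|\sigma|<\lambda_\infty$ is in hand, inverting $F'$ gives $\|\dot u\|_\infty\leq M:=(F')^{-1}(L)$ on $\mathrm{Reg}(u)$. A one-sided step variation $u\mapsto u-\eta\chi_{(t^*,1]}$ at a hypothetical jump of height $h\neq 0$ forces the optimality condition $\sigma(t^*)=\lambda_\infty\operatorname{sign}(h)$, now incompatible with $|\sigma|<\lambda_\infty$; a parallel argument excludes any Cantor part, so $D^su\equiv 0$ and $u\in W^{1,\infty}(0,1)$. The pointwise equation $\ddot u=\lambda(u-f)/F''(\dot u)$ together with $F''(\dot u)\geq\min_{|p|\leq M}F''(p)>0$ finally yields $\ddot u\in L^\infty(0,1)$, i.e.\ $u\in W^{2,\infty}(0,1)=C^{1,1}([0,1])$.
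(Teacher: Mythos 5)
Your argument is correct, but the mechanism you use at the decisive step is genuinely different from the paper's. The paper takes the first singular point $s$, multiplies the Euler equation $\frac{d}{dt}F'(\dot u)=\lambda(u-f)$ by $\dot u$ and integrates from $0$ (where $\dot u(0)=0$), obtaining the conservation-law identity (\ref{conlaw2}); this bounds $\omega(\dot u(t))=\dot u F'(\dot u)-F(\dot u)$ by $\tfrac{\lambda}{2}+|Du|(0,1)$ on $[0,s)$, and the hypothesis $\mu\le 2$ enters only through $\omega(p)=\int_0^p qF''(q)\,dq\to\infty$, i.e.\ $\omega_\infty=\infty$, which forces $\dot u$ to stay bounded and contradicts the existence of $s$. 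You instead work on the dual side: at the first point $t^*$ with $|\sigma(t^*)|=\lambda_\infty$ you combine the Lipschitz bound $|\dot\sigma|\le\lambda$ with the integrated ellipticity estimate $\lambda_\infty-F'(p)\ge\frac{c_1}{\mu-1}(1+p)^{1-\mu}$ to extract the quantitative blow-up rate $\dot u\gtrsim(t^*-t)^{-1/(\mu-1)}$, non-integrable exactly for $\mu\in(1,2]$, and you correctly close the loop by noting that the finite total mass of $D^su$ (from $K[u]\le K[0]$) prevents cancellation of the divergent absolutely continuous integral against $u\in L^\infty$. Both proofs are sound; the trade-off is that the paper's energy identity is softer and immediately yields Corollary \ref{Cor1.2} a) under the sole assumption $\omega_\infty=\infty$ without any pointwise lower bound on $F''$, whereas your argument genuinely needs the $\mu$-ellipticity (\ref{muell}) but is purely local near $t^*$ (it needs no integration from the boundary and no boundary condition $\dot u(0)=0$) and gives an explicit rate at which $\dot u$ would have to blow up. Your concluding steps (strict bound $\sup|\sigma|<\lambda_\infty$, hence $\|\dot u\|_\infty\le(F')^{-1}(L)$, exclusion of jump and Cantor parts, and $\ddot u\in L^\infty$ from the equation) reproduce in expanded form what the paper gets at once from Remark \ref{singularsigma} and Theorem \ref{Thm1.2} b), so they are redundant but not wrong.
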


 From the proofs of Theorem \ref{Thm1.5} and Theorem \ref{Thm1.6} we obtain the following slightly more general result on regular or irregular behaviour of minimizers avoiding the notion of $\mu$-ellipticity (\ref{muell}).
\begin{Cor}\label{Cor1.2}
Let $F$ satisfy (\ref{F1})-(\ref{F4}) and define $\omega_\infty$ as in (\ref{omegainfty}).
\begin{enumerate}[a)]
 \item In case $\omega_\infty=\infty$ any solution $u\in BV(0,1)$ is of class $C^{1,1}\big([0,1]\big)$ independent of the value of $\lambda$ and for arbitrary data $f\in L^\infty(0,1)$, $0\leq f\leq 1$ a.e.
 \item If $\omega_\infty<\infty$ and if (\ref{F5}) holds, then there is a critical value $\lambda_{\mathrm{crit}}$ of the parameter $\lambda$ such that the solution $u$ of (\ref{K}) with $f$ being defined in (\ref{specialdata}) is discontinuous (exactly at $t=\nicefrac{1}{2}$) provided we choose $\lambda>\lambda_{\mathrm{crit}}$. Moreover, it holds
\begin{align}
\max\{\lambda_\infty,8\omega_\infty\}\leq\lambda_{\mathrm{crit}}\leq 8(\lambda_\infty+\omega_\infty).
\end{align}
\end{enumerate}
\end{Cor}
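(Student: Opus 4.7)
Both parts are to be read off from the arguments already in place for Theorems \ref{Thm1.6} and \ref{Thm1.5}, by pinpointing precisely where $\mu$-ellipticity actually enters each proof.

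For part a), I would revisit the proof of Theorem \ref{Thm1.6} and verify that assumption (\ref{muell}) with $\mu\in(1,2]$ is used only through its consequence $\omega_\infty=\infty$. The essential a priori bound on $|\dot u|$ there is driven by the coercive quantity $pF'(p)-F(p)$, whose divergence at infinity rules out any blow-up of the derivative; once $\dot u\in L^\infty$ is established, the upgrade to $C^{1,1}\big([0,1]\big)$ proceeds exactly as in Theorem \ref{Thm1.1} c) via the Euler-Lagrange equation (\ref{ode}). Substituting the single hypothesis ``$\omega_\infty=\infty$'' for (\ref{muell}) in that proof therefore yields part a) for any admissible $f$ and any $\lambda>0$.

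For part b), three inequalities have to be assembled. The lower bound $\lambda_{\mathrm{crit}}\ge\lambda_\infty$ is immediate from Theorem \ref{Thm1.1}: for $\lambda<\lambda_\infty$ the $K$-minimizer coincides with the $J$-minimizer and lies in $C^{1,1}\big([0,1]\big)$, hence is continuous at $1/2$. The upper bound $\lambda_{\mathrm{crit}}\le 8(\lambda_\infty+\omega_\infty)$ is obtained by transporting the comparison used in the proof of Theorem \ref{Thm1.5}: the remark preceding this corollary already records that in the $\mu$-elliptic case the threshold $8/(\mu-2)$ factors exactly as $8(\lambda_\infty+\omega_\infty)$. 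A careful inspection of the test-function computation confirms that the density $F$ enters only through the two limits $\lambda_\infty$ and $\omega_\infty$, with (\ref{F5}) needed to keep the relevant integrands involving $F''(\dot u)$ against $(u-f)^2$ integrable. Hence the same inequality $\lambda>8(\lambda_\infty+\omega_\infty)$ forces a jump of $u$ at $1/2$ for any density satisfying (\ref{F1})-(\ref{F4}) and (\ref{F5}).

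The remaining lower bound $\lambda_{\mathrm{crit}}\ge 8\omega_\infty$ is the principal obstacle. The plan is to assume that $u$ jumps at $1/2$ with height $h>0$, construct a continuous competitor $\tilde u$ by linear interpolation of $u$ across a symmetric interval $(1/2-\varepsilon,1/2+\varepsilon)$, and exploit the asymptotic identity $pF'(p)-F(p)\to\omega_\infty$ to expand the regularization term. Comparing $K[u]$ with $K[\tilde u]$ and optimizing over $\varepsilon$ (with $h$ fixed by the symmetry $u(t)+u(1-t)=1$ that the step data forces on the minimizer), the gain $\lambda_\infty h$ in the singular part should be offset by a deficit of order $\omega_\infty$ in $\intop_0^1 F(\dot{\tilde u})\dt$ together with a data-fitting cost of order $\lambda h^2$, and minimality of $u$ is consistent with this only if $\lambda\ge 8\omega_\infty$. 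The technical difficulty is to carry out this expansion uniformly using only the defining limit for $\omega_\infty$, mirroring the sharpness side of the computation underlying the proof of Theorem \ref{Thm1.5}.
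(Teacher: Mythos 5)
Your part a) and the first two inequalities of part b) follow the paper's route. For a) the paper does exactly what you describe: in the proof of Theorem \ref{Thm1.6} the ellipticity (\ref{muell}) with $\mu\in(1,2]$ enters only through the divergence of $\omega(p)=pF'(p)-F(p)$, and the first integral $\omega(\dot u(t))=\frac{\lambda}{2}(u(t)^2-u(0)^2)-\int_0^t f\dot u\,d\tau$ bounds $\omega(\dot u)$ by $\frac{\lambda}{2}+|Du|(0,1)$ on $[0,s)$ for the first singular point $s$, which is incompatible with $|\dot u(t_k)|\to\infty$ when $\omega_\infty=\infty$. Likewise, your observations that $\lambda_{\mathrm{crit}}\geq\lambda_\infty$ comes from Theorem \ref{Thm1.1} and that the upper bound $8(\lambda_\infty+\omega_\infty)$ is the proof of Theorem \ref{Thm1.5} with $F$ entering only through $\lambda_\infty$ (via $K[\ul]\leq K[f]=\lambda_\infty$, i.e.\ (\ref{u(0)})) and $\omega_\infty$ (via (\ref{fint})) are exactly what the paper does.

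The gap is the bound $\lambda_{\mathrm{crit}}\geq 8\omega_\infty$, which you correctly flag as the hard part but attack with a competitor construction that I do not believe can work, and which in any case you do not carry out. The difficulty is this: at a jump point the dual variable satisfies $|\sigma(\nicefrac{1}{2})|=\lambda_\infty$ (Remark \ref{singularsigma}), so $\dot u(t)\to\infty$ as $t\uparrow\nicefrac{1}{2}$; consequently $u$ is already asymptotically ``ramp-like'' next to the jump. Writing $\lambda_\infty p-F(p)=\omega(p)+p\bigl(\lambda_\infty-F'(p)\bigr)\to\omega_\infty$ as $p\to\infty$, the energy saved by replacing $Du$ on $(\nicefrac{1}{2}-\eps,\nicefrac{1}{2}+\eps)$ by its uniform (linear) average is $\int_{1/2-\eps}^{1/2+\eps}\bigl[\omega_\infty-(\lambda_\infty\dot u-F(\dot u))\bigr]\,dt+o(\eps)=o(\eps)$, because the integrand tends to $0$ as $t\to\nicefrac{1}{2}$; the data-fitting penalty, by contrast, is genuinely of order $\lambda\eps$. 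So $K[\tilde u]-K[u]\geq 0$ gives no restriction on $\lambda$, and optimizing over $\eps$ does not rescue the sharp constant $8$. The paper obtains the bound instead from the same first integral it uses everywhere else: on $[0,\nicefrac{1}{2})$ the data vanish, so (\ref{conlaw})/(\ref{fint}) give $\omega(\dul(s))=\frac{\lambda}{2}\bigl(\ul(s)^2-\ul(0)^2\bigr)$ exactly; a truncation argument forces $0\leq\ul\leq\nicefrac{1}{2}$ on $[0,\nicefrac{1}{2})$, hence $\omega(\dul(s))\leq\frac{\lambda}{8}$, and letting $s\uparrow\nicefrac{1}{2}$ with $\dul(s)\to\infty$ (valid whenever $\ul$ jumps) yields $\omega_\infty\leq\frac{\lambda}{8}$ for every $\lambda>\lambda_{\mathrm{crit}}$. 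You should replace your competitor sketch by this conservation-law argument, which you already have in hand from your treatment of part a) and of the upper bound.
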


\begin{Rem}
 Comparing part b) of the above corollary to parts a) and c) of Theorem \ref{Thm1.4}, we would like to emphasize that the occurence of discontinuous minimizers requires discontinuous data.
\end{Rem}

By part $c)$ of Theorem \ref{Thm1.1}, the minimization problem (\ref{model}) leads  to the second-order Neumann problem (\ref{ode}). Conversely, we could take this equation as our starting point and examine existence and regularity of solutions purely by methods from the theory of ordinary differential equations. In the articles \cite{Th1} and \cite{Th2}, Thompson has worked out an extensive theory for a large class of two-point boundary value problems with both continuous and measurable right-hand sides, which we could apply to our situation with the following result:
\begin{Satz}[regularity for $\mu$-elliptic densities and $\mu>1$ arbitrary]\label{Thm1.3}
Suppose $0\leq f(t)\leq 1$ a.e. on $[0,1]$ and let $F$ satisfy (\ref{F1})-(\ref{F3}) as well as (\ref{muell}).
If the parameter $\lambda$ satisfies
\[  
0<\lambda<\sup_{L>1}\frac{1}{c_1}\intop_{1}^L \frac{s\,ds}{(1+s)^\mu}=:\lambda_\mu, 
\]
where $c_1$ is as in (\ref{muell}), then there exists $v\in W^{2,1}(0,1)$, satisfying $0\leq v(t)\leq 1$ for  all $t\in[0,1]$ and which solves the Neumann problem (\ref{ode}) a.e. on $[0,1]$. Furthermore, this solution coincides with the unique $K$-minimizer $u$ from the space $BV(0,1)$.
\end{Satz}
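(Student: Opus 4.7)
I would rewrite the Neumann problem (\ref{ode}) in the normal form $\ddot v=g(t,v,\dot v)$ with
\[ g(t,u,p):=\lambda\,\frac{u-f(t)}{F''(p)}, \]
and treat it as a two-point boundary value problem of Carath\'eodory type (since $f$ is merely measurable), applying Thompson's existence theory from \cite{Th1,Th2}. The strategy rests on three ingredients: constant barriers supplied by the a priori bounds on $f$, a Bernstein--Nagumo growth estimate dictated by the $\mu$-ellipticity (\ref{muell}), and an a posteriori identification of the ODE solution with the $K$-minimizer $u$ of Theorem~\ref{Thm1.2}.

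\textbf{Step 1 (barriers and Nagumo condition).} The constants $\alpha\equiv 0$ and $\beta\equiv 1$ are a lower, respectively an upper, solution compatible with the Neumann data: $\dot\alpha\equiv\dot\beta\equiv 0$, and using $0\le f\le 1$ together with $F''(0)>0$ one has
\[ \ddot\alpha=0\ge -\lambda f/F''(0)=g(t,0,0), \qquad \ddot\beta=0\le \lambda(1-f)/F''(0)=g(t,1,0). \]
Any solution produced by Thompson's scheme is therefore trapped in $[0,1]$. In this strip (\ref{muell}) yields the pointwise estimate
\[ |g(t,u,p)|\le \frac{\lambda}{c_1}\,(1+|p|)^\mu=:\psi(|p|), \]
so the finite Bernstein--Nagumo condition requires some $N>0$ with $\int_0^N s\,ds/\psi(s)$ exceeding the oscillation of $v$, which is at most $1$ by the barrier step. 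Rewriting this inequality as
\[ \frac{c_1}{\lambda}\int_0^N \frac{s\,ds}{(1+s)^\mu}>1 \]
isolates exactly the threshold $\lambda<\lambda_\mu$ (up to the precise normalization adopted in \cite{Th1,Th2}) and yields an a priori bound on $|\dot v|$ independent of any regularization.

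\textbf{Step 2 (existence and identification).} With the barriers and the Nagumo bound in hand, Thompson's existence result (a Schauder fixed-point argument applied to a truncated version of $g$ with bounded right-hand side, the truncation being removed via the a priori estimates) produces $v\in W^{2,1}(0,1)$ solving (\ref{ode}) a.e.\ on $[0,1]$ with $0\le v\le 1$. To identify $v$ with the $K$-minimizer, I observe that $v\in W^{1,1}(0,1)$ satisfies the Euler--Lagrange equation together with the natural boundary condition $\dot v(0)=\dot v(1)=0$; integrating by parts against arbitrary test functions $\varphi\in W^{1,1}(0,1)$ shows that $v$ is a critical point of the strictly convex functional $J$, hence its unique $J$-minimizer. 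Since $K$ is the $L^1$-lsc envelope of $J$ on $BV(0,1)$ and agrees with $J$ on $W^{1,1}(0,1)$, an approximation argument gives $K[v]\le K[w]$ for every $w\in BV(0,1)$, and uniqueness in Theorem~\ref{Thm1.2} then forces $v=u$.

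\textbf{Main obstacle.} The delicate point is Step~1: for $\mu>2$ the Nagumo integral is only finite, so the a priori estimate on $|\dot v|$ is quantitative rather than qualitative and the precise threshold $\lambda_\mu$ has to be read off from the interplay between the oscillation bound $\le 1$ (itself furnished by the barriers) and the truncation procedure underlying Thompson's fixed-point argument, making sure that the constant barriers from Step~1 are preserved under the truncation. By contrast, the variational identification in Step~2 is a routine consequence of the strict convexity of $J$.
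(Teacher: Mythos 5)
Your proposal follows essentially the same route as the paper's proof: constant barriers $\alpha\equiv 0$, $\beta\equiv 1$, a Bernstein--Nagumo growth condition derived from (\ref{muell}) feeding into Thompson's existence theory for Carath\'eodory two-point boundary value problems, and identification of the ODE solution with the $K$-minimizer via convexity of $J$ together with strict approximation in $BV(0,1)$. The only differences are cosmetic: the paper tracks an auxiliary parameter $\eps$ and the set-valued boundary conditions of \cite{Th2} explicitly, normalizes the Nagumo integral over $[1,L]$ rather than $[0,N]$, and carries out the $BV$-approximation step in detail (using that $u$ is $C^{1,1}$ near the endpoints so that mollification creates no extra variation there), all of which you correctly flag as normalization and routine verification.
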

\begin{Rem}\label{rem1.7}
\begin{enumerate}[(i)]
 \item The reader being familiar with the theory of lower and upper solutions will recognise the above bound $\lambda_\mu$ as a sort of "Nagumo-condition" (see, e.g. \cite{CH}), which guarantees a priori bounds on the first derivative of the solution $v$.
 \item If $f$ is continuous, the differential equation implies $v\in C^2\big([0,1]\big)$. 
 \item Using the example $F(p)=\Phi_\mu(|p|)$ we would like to demonstrate, how $\lambda_\mu$ might actually improve the bound for $\lambda$ stated in (\ref{criticallambda}) of Theorem \ref{Thm1.1}: obviously, the integral defining $\lambda_\mu$ diverges for $1<\mu\leq 2$ and is unbounded if $\mu$ approaches 2 from above. In combination with part (ii) of this remark, we consequently get full $C^{2}\big([0,1]\big)$-regularity for arbitrarily large values of the parameter $\lambda$ and continuous data $f$, if we let $\mu\downarrow 2$. In fact it holds $\lambda_\mu>\lambda_\infty(\Phi_\mu)=\frac{1}{\mu-1}$ up to $\mu\approx2.803$.
\end{enumerate}
 \end{Rem}
Since it is somewhat difficult to track the various regularity statements from Theorem \ref{Thm1.1} up to \ref{Thm1.3}, we have summarized our main results in form of a table. It shows the regularity of the $K$-minimizer $u$ dependent on the data $f$, the density $F$ and the bound on the parameter $\lambda$.

\begin{table}[h!]
\begin{tabular}{|c|c|c||c|c|}\hline
   Data $f$ & Density $F$ & Bound on $\lambda$ & Regularity of $u$ & Reference \\ \thickhline
   $L^\infty(0,1)$ & (\ref{F1})-(\ref{F4}) & $0<\lambda<\lambda_\infty$ & $C^{1,1}\big([0,1]\big)$ & Theorem \ref{Thm1.1} a) \\ \hline
   $L^\infty(0,1)$ & (\ref{F1})-(\ref{F4}) & $\lambda>0$ & $W^{2,\infty}_{\mathrm{loc}}\big(\mathrm{Reg}(u)\big)$ & Theorem \ref{Thm1.2} b) \\ \hline
   $W^{1,2}_{\mathrm{loc}}(a,b)$ & \begin{minipage}{2cm}(\ref{F1})-(\ref{F4}),\\ \hspace*{0.5cm}(\ref{F5})\end{minipage} & $\lambda>0$ & \begin{minipage}{2.2cm}$W^{1,1}(a,b)\ \\ \cap W^{1,2}_{\mathrm{loc}}(a,b)$\end{minipage} & Theorem \ref{Thm1.2} c) \\ [1.2ex] \hline  
   \begin{minipage}{2cm} continuous \\  \hspace*{0.4cm} at $t_0$\end{minipage} & (\ref{F1})-(\ref{F4}) & $\lambda>0$ & \begin{minipage}{2cm} continuous \\ \hspace*{0.4cm} at $t_0$\end{minipage} & Theorem \ref{Thm1.4} a) \\ [1ex] \hline 
   $W^{1,1}(0,1)$ & (\ref{F1})-(\ref{F4}) & \begin{minipage}{2cm}$\lambda(\frac{1}{2}+\|\dot{f}\|_1)$\\\hspace*{0.4cm} $<\omega_\infty$ \end{minipage}& $C^{1,1}\big([0,1]\big)$ & Theorem \ref{Thm1.4} c) \\ [1.2ex] \hline 
   $L^\infty(0,1)$ &  \begin{minipage}{2cm}(\ref{F1})-(\ref{muell})\\ \hspace*{0.1cm}$\mu\in (1,2]$\end{minipage} & $\lambda>0$ & $C^{1,1}\big([0,1]\big)$ & Theorem \ref{Thm1.6} \\ [1.2ex]\hline
   $L^\infty(0,1)$ & \begin{minipage}{2cm}(\ref{F1})-(\ref{F4})\\ \hspace*{0.1cm}$\omega_\infty=\infty$\end{minipage}& $\lambda>0$ & $C^{1,1}\big([0,1]\big)$ & Corollary \ref{Cor1.2} a) \\ [1.2ex]  \hline
   $L^\infty(0,1)$ & (\ref{F1})-(\ref{muell}) & $0<\lambda<\lambda_\mu$ & $W^{2,1}(0,1)$ & Theorem \ref{Thm1.3} \\ \hline 
  \end{tabular}
\caption{Overview of the various regularity statements}
\end{table}
Our article is organized as follows: in Section 2 we prove Theorem \ref{Thm1.1} and thus solvability of problem (\ref{J}) and regularity of the unique $W^{1,1}$-minimizer under a rather strong bound on the parameter $\lambda>0$. Section 3 is devoted to the study of the relaxed problem (\ref{K}) where the parameter $\lambda>0$ may be chosen arbitrarily large. The subsequent section deals with a refinement of our regularity result for certain classes of "well behaved" data. Section 5 is devoted to the construction of the counterexample from Theorem \ref{Thm1.5}. Subsequently, we give the proof of Theorem \ref{Thm1.6} where $\mu$-elliptic densities are considered for $\mu\in(1,2]$ and then take a closer look at the Neumann-type boundary value problem (\ref{ode}) from Theorem \ref{Thm1.3} in the seventh section. Finally, we compare our results with a numerically computed example.
\end{section}
\begin{section}{Proof of Theorem \ref{Thm1.1}}\label{section1}
\noindent\textbf{\textit{Proof of part a)}}. Let us assume the validity of the hypotheses of Theorem \ref{Thm1.1}. We first note that  problem (\ref{J}) has at most one solution thanks to the strict convexity of the data fitting quantity $\int\limits_{0}^{1}{(w-f)^2\dt}$ with respect to $w$. Next we show that there exists at least one solution. To this purpose we approximate our original variational problem by a sequence of more regular problems admitting smooth solutions with appropriate convergence properties. This technique is well known from the works \cite{BF1}, \cite{BF2}, \cite{BF3} or \cite{FT}. To become more precise, for fixed $\delta\in(0,1]$ we consider the problem
\begin{align}
\label{Jdelta} J_{\delta}[w]:=\int\limits_{0}^{1}{F_{\delta}(\dot{w})\dt}+\frac{\lambda}{2}\int\limits_{0}^{1}{(w-f)^2\dt}\rightarrow\text{min in}\,\,W^{1,2}(0,1),
\end{align}
where
\begin{align}
 \label{Fdelta} F_{\delta}(p):=\frac{\delta}{2}|p|^2+F(p),\quad p\in\R.
\end{align}
In the following lemma we state that (\ref{Jdelta}) is uniquely solvable in $W^{1,2}(0,1)$ and in addition we will summarize some useful properties of the unique $J_{\delta}$-minimizer $u_{\delta}$. In fact, these results are well-known and have been proved in a much more general setting (see, e.g., \cite{BF1} and \cite{BF2, BF3}).
\begin{Lem}\label{Lem_ud}
The problem (\ref{Jdelta}) admits a unique solution $\ud\in W^{1,2}(0,1)$ for which we have
\begin{enumerate}[a)]
 \item $0\leq\ud\leq 1$ on $[0,1]$,
\item $\ud\in W^{2,2}_{\mathrm{loc}}(0,1)$ (not necessarily uniform in $\delta$),
\item $\sup\limits_{0\leq\delta<1}{\|\ud\|_{W^{1,1}(0,1)}}<\infty$,
\item $\sup\limits_{0\leq\delta<1}{\delta\int\limits_{0}^{1}{|\dot{u}_{\delta}|^2\dt}}<\infty$.
\end{enumerate}
\end{Lem}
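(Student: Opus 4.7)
Since $F_\delta''(p)\ge \delta>0$ and $F$ satisfies (\ref{F1})--(\ref{F4}), the density $F_\delta$ is smooth, strictly convex, even, and vanishes at $0$; moreover, $F_\delta(p)\ge \tfrac{\delta}{2}p^{2}+\nu_{2}|p|-\nu_{3}$ by (\ref{F3}), giving quadratic growth from below. Existence and uniqueness of a minimizer $u_\delta\in W^{1,2}(0,1)$ then follow at once by the direct method: strict convexity of $J_\delta$ yields uniqueness, while coercivity on $W^{1,2}(0,1)$ comes from Young's inequality $(w-f)^{2}\ge \tfrac12 w^{2}-f^{2}$ leading to
\begin{equation*}
J_{\delta}[w]\ge \frac{\delta}{2}\|\dot w\|_{L^{2}(0,1)}^{2}+\frac{\lambda}{4}\|w\|_{L^{2}(0,1)}^{2}-C(\lambda,\|f\|_{L^{\infty}})-\nu_{3},
\end{equation*}
and lower semi-continuity with respect to weak $W^{1,2}$-convergence is standard for convex integrands.

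For a), I would argue by truncation. Set $v_\delta:=\max\{0,\min\{1,u_\delta\}\}$. The hypothesis $0\le f\le 1$ yields $(v_\delta-f)^{2}\le(u_\delta-f)^{2}$ pointwise a.e., and the fact that $F_\delta$ is convex, even with $F_\delta(0)=0$ makes it nondecreasing in $|p|$; combined with $|\dot v_\delta|\le|\dot u_\delta|$ a.e., this gives $F_\delta(\dot v_\delta)\le F_\delta(\dot u_\delta)$. Hence $J_\delta[v_\delta]\le J_\delta[u_\delta]$, and uniqueness forces $u_\delta=v_\delta$, proving $0\le u_\delta\le 1$.

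For c) and d), I would test the minimality of $u_\delta$ against $w\equiv 0$. Using $F(0)=0$ and $0\le f\le 1$,
\begin{equation*}
\frac{\delta}{2}\int_{0}^{1}|\dot u_\delta|^{2}\,dt+\int_{0}^{1}F(\dot u_\delta)\,dt+\frac{\lambda}{2}\int_{0}^{1}(u_\delta-f)^{2}\,dt\le J_\delta[0]\le \frac{\lambda}{2}.
\end{equation*}
The first term on the left-hand side yields d) directly and uniformly in $\delta$; the second, combined with (\ref{F3}), provides the bound $\nu_{2}\|\dot u_\delta\|_{L^{1}(0,1)}\le \tfrac{\lambda}{2}+\nu_{3}$, which together with a) gives c).

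For b), in the one-dimensional setting the Euler--Lagrange equation for $J_\delta$ reduces to $\frac{d}{dt}F_\delta'(\dot u_\delta)=\lambda(u_\delta-f)$ in $\mathcal{D}'(0,1)$. Since the right-hand side lies in $L^{2}(0,1)$ by a), and since $F_\delta'(\dot u_\delta)=F'(\dot u_\delta)+\delta\dot u_\delta\in L^{2}(0,1)$ (because $|F'|\le\nu_{1}$ by (\ref{F2})), we conclude $F_\delta'(\dot u_\delta)\in W^{1,2}(0,1)$; because $F_\delta''\ge\delta$, the map $F_\delta'$ is a bi-Lipschitz homeomorphism of $\R$ with inverse Lipschitz constant $1/\delta$, so $\dot u_\delta=(F_\delta')^{-1}\bigl(F_\delta'(\dot u_\delta)\bigr)$ inherits $W^{1,2}_{\mathrm{loc}}$-regularity, yielding b). The Lipschitz constant of $(F_\delta')^{-1}$ blows up as $\delta\downarrow 0$, consistently with the non-uniformity in $\delta$ noted in the statement. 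The only non-routine step across the whole argument is this inversion, but it is entirely classical for uniformly elliptic functionals of quadratic growth and proceeds exactly as in \cite{BF1,BF2,BF3}.
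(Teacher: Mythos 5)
Your proposal is correct and, for the existence/uniqueness statement and for parts a), c), d), it follows exactly the route the paper takes (direct method; truncation at the levels $0$ and $1$ using $0\leq f\leq 1$; comparison with the competitor $w\equiv 0$ giving $J_\delta[u_\delta]\leq J_\delta[0]\leq\lambda/2$, from which d) is read off from the quadratic term and c) from (\ref{F3}) together with a)). The paper merely cites \cite{BF1} and \cite{BF4} for the truncation step, whereas you spell it out; your argument is the intended one. The only genuine divergence is part b): the paper invokes the difference quotient technique, while you derive the Euler--Lagrange identity $\frac{d}{dt}F_\delta'(\dot{u}_\delta)=\lambda(u_\delta-f)\in L^2(0,1)$, conclude $F_\delta'(\dot{u}_\delta)\in W^{1,2}(0,1)$, and then invert $F_\delta'$ using that its inverse is globally Lipschitz with constant $1/\delta$ (a consequence of $F_\delta''\geq\delta$). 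This is a clean and perfectly valid alternative in one dimension, and it even yields the slightly stronger conclusion $u_\delta\in W^{2,2}(0,1)$ globally; it is essentially the same inversion device the paper itself uses later (via Lemma \ref{inversefunction} and the identity $\sigma_\delta=F_\delta'(\dot{u}_\delta)$) to get uniform bounds. One cosmetic caveat: $F_\delta'$ need not be globally Lipschitz in the forward direction since (\ref{F1})--(\ref{F4}) do not bound $F''$ from above, so ``bi-Lipschitz homeomorphism'' is an overstatement — but only the Lipschitz continuity of $(F_\delta')^{-1}$ is used, and that part is correct.
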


\begin{proof}[Proof of Lemma \ref{Lem_ud}]
By the direct method it is immediate that problem (\ref{Jdelta}) has a unique solution $\ud\in W^{1,2}(0,1)$. Since $0\leq f\leq1$ a.e. on $\Omega$, a truncation argument as already carried out in \cite{BF1}, proof of Theorem 1.8 a), (we refer the reader to \cite{BF4} as well) shows $0\leq\ud\leq1$  on $\Omega$, and this proves part a).\\
For part b) we use the well-known difference quotient technique. Observing that we have the uniform estimate $J_{\delta}[\ud]\leq J[0]$ we directly obtain parts c) and d) if we use the definition of $J_{\delta}$ and recall the linear growth of $F$.
\end{proof}

\begin{Rem}
 Note that the results of Lemma \ref{Lem_ud} do not depend on the size of the parameter $\lambda>0$.
\end{Rem}

\begin{Rem}\label{Sobolev_embedding}
In our particular one-dimensional case we emphasize once more that by means of Sobolev's embedding $W^{2,2}_{\mathrm{loc}}(0,1)\hookrightarrow C^1(0,1)$ (see \cite{Ad}) we conclude that $\dot{u}_\delta(t)$ exists for all $t\in(0,1)$ and is continuous.
\end{Rem}

Before starting with the proof of Theorem \ref{Thm1.1} we recall that from the assumptions (\ref{F1})--(\ref{F4}) imposed on the density $F$ and the definition of $\lambda_{\infty}$ (compare (\ref{lambdainf})) it follows
\begin{align}
 \label{Thm14} \text{Im}(F')=(-\lambda_{\infty},\lambda_{\infty}).
\end{align}
Next, we fix $\lambda\in(0,\lambda_{\infty})$ and observe the validity of the following lemma which is of elementary nature but will be important during the further proof.
\begin{Lem}\label{inversefunction}
The inverse function of $F_{\delta}':\R\rightarrow\R$ is uniformly (in $\delta$) bounded on the set $[-\lambda,\lambda]$. 
\end{Lem}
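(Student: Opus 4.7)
My plan is to make this a short, essentially one-line observation based on monotonicity. First, note that $F_\delta'(p)=F'(p)+\delta p$. By (\ref{F1}) the derivative $F'$ is odd, (\ref{F4}) says $F''>0$ so $F'$ is strictly increasing, and (\ref{lambdainf}) together with (\ref{Thm14}) tells us $F':\R\to(-\lambda_\infty,\lambda_\infty)$ is a bijection. In particular, since $\lambda<\lambda_\infty$ by the standing assumption, the number
\[
 p_\lambda:=(F')^{-1}(\lambda)\in(0,\infty)
\]
is well-defined and finite, depending only on $F$ and $\lambda$ but not on $\delta$. Note also that $F_\delta'$ is itself odd and strictly increasing with $F_\delta'(0)=0$, so $(F_\delta')^{-1}:\R\to\R$ exists and is odd.

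I would then argue as follows. Fix $q\in[-\lambda,\lambda]$ and let $p=(F_\delta')^{-1}(q)$. By oddness it suffices to treat $q\in[0,\lambda]$, in which case monotonicity of $F_\delta'$ forces $p\ge 0$. Then
\[
 F'(p)=q-\delta p\le q\le\lambda,
\]
and since $F'$ is strictly increasing this gives $p\le p_\lambda$. By symmetry, $|p|\le p_\lambda$ for every $q\in[-\lambda,\lambda]$, which is exactly the uniform-in-$\delta$ bound claimed. There is really no obstacle here: the key point is simply that the perturbation $\delta p$ has the \emph{same} sign as $p$, so it pushes $F_\delta'(p)$ further away from zero rather than closer, and the bound on $|p|$ coming from the unperturbed equation $F'(p)=q$ remains valid.
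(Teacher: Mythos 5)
Your argument is correct and is essentially the paper's own proof: the paper fixes $\alpha$ with $F'(\alpha)=\lambda$ (your $p_\lambda$) and derives the same bound by contradiction from the monotonicity of $F_\delta'$ and the identity $F_\delta'(\alpha)=\delta\alpha+\lambda>\lambda$, which is exactly your observation that the perturbation $\delta p$ has the same sign as $p$. The only difference is presentational (direct estimate versus contradiction), so there is nothing to add.
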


\begin{proof}[Proof of Lemma \ref{inversefunction}]
We observe that $F'$ is an odd, strictly increasing function (compare (\ref{F4})) inducing a diffeomorphism between $\R$ and the open interval $(-\lambda_{\infty},\lambda_{\infty})$. Let us write $(F')^{-1}\big([-\lambda,\lambda]\big)=[-\alpha,\alpha]$ where $F'(\alpha)=\lambda$. Next we choose $t\in[-\lambda,\lambda]$ and assume that $(F_{\delta}')^{-1}(t)>\alpha$. Then it follows (note that $F_{\delta}'$ is strictly increasing)
\[
 t>F_{\delta}'(\alpha)=\delta\alpha+F'(\alpha)=\delta\alpha+\lambda>\lambda,
\]
which is a contradiction. The case $(F_{\delta}')^{-1}(t)<-\alpha$ is treated in the same manner. Thus, the lemma is proved.
\end{proof}

After these preparations we proceed with the proof of Theorem \ref{Thm1.1} a). First, we introduce the continuous functions
\begin{align}\label{defsig}
 \sigma_{\delta}:=F_{\delta}'(\dot{u}_{\delta}).
\end{align}
We wish to note (see, e.g., \cite{FT}) that $\sigma_{\delta}$ is the (unique) solution of the variational problem being in duality to (\ref{Jdelta}) (we will come back to this later in the proof of Theorem \ref{Thm1.4} c)). Using (\ref{F2}) together with Lemma \ref{Lem_ud} d), we obtain 
\begin{align}
 \label{sigmaL2} \sigma_{\delta}\in L^2(0,1)\quad\text{uniformly in}\,\,\delta.
\end{align}
Next, we observe that $\ud$ solves the Euler equation
\begin{align}
\label{eueq} 0=\int\limits_{0}^{1}{F_{\delta}'(\dot{u}_{\delta})\dot{\varphi}\dt}+\lambda\int\limits_{0}^{1}{(\ud-f)\varphi \dt} 
\end{align}
for all $\varphi\in W^{1,2}(0,1)$. Note, that by (\ref{defsig}) this equation states that  $\sigma_{\delta}$ is weakly differentiable with
\begin{align}
 \label{Thm11} \dot{\sigma}_{\delta}=\lambda(\ud-f)\quad\text{a.e. on}\,\,(0,1).
\end{align}
Combining Lemma \ref{Lem_ud} a) with (\ref{sigmaL2}) and (\ref{Thm11}) it follows (recall our assumption $0\leq f\leq1$ a.e. on $(0,1)$)
\begin{align}
 \label{Thm12} \sigma_{\delta}\in W^{1,\infty}(0,1)=C^{0,1}\big([0,1]\big)\quad\text{uniformly in}\,\,\delta\,\,\text{and}\,\,\|\dot{\sigma}_{\delta}\|_{\infty}\leq\lambda.
\end{align}
Choosing $\varphi\in C^1\big([0,1]\big)$ in (\ref{eueq}) and recalling (\ref{Thm11}) it holds (see \cite{HS}, (18.16) Theorem, p. 285 or \cite{BGH}, Chapter 2)
\begin{align*}
0=&\int\limits_{0}^{1}{\big(\dot{\sigma}_{\delta}\varphi+\sigma_{\delta}\dot{\varphi}\big)\dt}=\int\limits_{0}^{1}{\frac{d}{dt}\big(\sigma_{\delta}\varphi\big)\dt}=\sigma_{\delta}(1)\varphi(1)-\sigma_{\delta}(0)\varphi(0).
\end{align*}
Thus, since $\varphi\in C^1\big([0,1]\big)$ is arbitrary it must hold
\begin{align}
 \label{Thm13} \sigma_{\delta}(0)=\sigma_{\delta}(1)=0.
\end{align}
Note that (\ref{Thm12}) and (\ref{Thm13}) imply
\begin{align}
 \label{Thm131} \|\sigma_{\delta}\|_{\infty}\leq\lambda.
\end{align}
At this point, the definition of $\sigma_{\delta}$, (\ref{Thm12}), (\ref{Thm13}), (\ref{Thm131}) and Lemma \ref{inversefunction} yield existence of a constant $M>0$, independent of $\delta$, such that
\begin{align}
 \label{Thm16} \|\dot{u}_{\delta}\|_{\infty}\leq M.
\end{align}
Here we have made essential use of the restriction $\lambda<\lambda_{\infty}$. As a consequence, there exists a function $u\in W^{1,\infty}(0,1)$ such that $\ud\rightrightarrows u$ uniformly as $\delta\downarrow0$ and $\dot{u}_{\delta}\rightharpoondown\dot{u}$ in $L^p(0,1)$ for all finite $p>1$ as $\delta\downarrow0$, at least for a subsequence. Now, our goal is to show that $u$ is $J$-minimal: thanks to the $J_{\delta}$-minimality of $\ud$ it follows for all $v\in W^{1,2}(0,1)$
\[
 J[\ud]\leq J_{\delta}[\ud]\leq J_{\delta}[v]\overset{\delta\downarrow0}{\rightarrow} J[v]
\]
together with 
\[
 J[u]\leq\liminf\limits_{\delta\rightarrow0}{J[\ud]}.
\]
Thus, we have $J[u]\leq J[v]$ for all $v\in W^{1,2}(0,1)$ and from this we get $J[u]\leq J[w]$ for all $w\in W^{1,1}(0,1)$ by approximating $w$ with a sequence $(v_k)\subset W^{1,2}(0,1)$ in the $W^{1,1}$-topology. This finally proves  $u$ to be a solution of problem (\ref{J}).  This proves part a). We continue with the

\noindent{\textbf{\textit{proof of part b)}}}. Considering the relaxed variant $K$ from (\ref{K}) of the functional $J$, it is easy to check that $K$ has a unique solution $\widetilde{u}\in BV(0,1)$, compare the comments given in the beginning of the proof of Theorem \ref{Thm1.2} a). This, together with the $J$-minimality of $u$, implies $K[\widetilde{u}]\leq J[u]$ since it holds $K[w]=J[w]$ for all functions $w\in W^{1,1}(0,1)$. To show the reverse inequality we note, that we can approximate $\widetilde{u}\in BV(0,1)$ by a sequence of smooth functions $(u_n)\subset C^{\infty}(0,1)$ such that (as $n\rightarrow\infty$)
\[
 u_n\rightarrow \widetilde{u}\quad\text{in}\,\,L^1(0,1)\quad\text{and}\,\,\int\limits_{0}^{1}{\sqrt{1+\dot{u}_n^2}\,dt}\rightarrow\int\limits_{0}^{1}{\sqrt{1+|D\widetilde{u}|^2}}
\]
(see e.g. \cite{AG}, Proposition 2.3), where $\int\limits_{0}^{1}{\sqrt{1+|D\widetilde{u}|^2}}$ denotes the total variation of the vector measure $(\mathcal{L}^1,Du)^T$. Note that we even have $u_n\rightarrow \widetilde{u}\quad\text{in}\,\,L^p(0,1)$ for any finite $p>1$ by the $BV$-emebdding theorem. Now it is well-known that the functional $K$ is continuous with respect to the above notion of convergence (see e.g. \cite{AG}, Proposition 2.2) and it follows
\[
 K[\widetilde{u}]=\lim\limits_{n\rightarrow\infty}{K[u_n]}=\lim\limits_{n\rightarrow\infty}{J[u_n]}\geq J[u].
\]
Hence, $K[\widetilde{u}]=J[u]$, i.e. $u$ is $K$-minimal and it holds $u=\widetilde{u}$ due to the uniqueness of the $K$-minimizer.  Finally, we give the

\noindent\textbf{\textit{proof of part c)}}. By (\ref{eueq}) and Lemma \ref{Lem_ud} b) it holds
\[
 \ddot{u}_{\delta}=\lambda\frac{(\ud-f)}{F_{\delta}''(\dot{u}_{\delta})}\quad\text{a.e. on}\,\,(0,1),
\]
hence $\dot{u}_\delta\in W^{1,\infty}(0,1)$ uniformly in $\delta$ on account of (\ref{Thm16}). Thus the functions $\dot{u}_\delta$ have a unique Lipschitz extension to the boundary points $0$ and $1$, which in particular implies the differentiability of $u_\delta$ at $0$ and $1$ with values of the derivatives given by the values of the Lipschitz extension of $\dot{u}_\delta$. Thus there is a clear meaning of $\dot{u}_\delta(0)$ and $\dot{u}_\delta(1)$. By continuity reasons the defining equation (\ref{defsig}) for $\sigma_\delta$ extends to the boundary points of $(0,1)$ and since $F'_\delta$ vanishes exactly in the origin, it follows from (\ref{Thm13}) that $\dot{u}_\delta(0)=\dot{u}_\delta(1)=0$. Combining this with the uniform boundedness of $u_\delta$ in $C^{1,1}\big([0,1]\big)$, we immediately see that $u\in C^{1,1}\big([0,1]\big)$ holds together with the boundary condition $\dot{u}(0)=\dot{u}(1)=0$. Furthermore, $u$ solves the Euler equation
\[
 0=\int\limits_{0}^{1}{F'(\dot{u})\dot{\varphi}\dt}+\lambda\int\limits_{0}^{1}{(u-f)\varphi \dt}
\]
for all $\varphi\in C^1_0(0,1)$ and from this we conclude the validity of the relation
\[
 \frac{d}{dt}F'(\dot{u})=\lambda(u-f)\quad\text{a.e. on}\,\,(0,1).
\]
Consequently, we have
\[
 \ddot{u}=\lambda\frac{u-f}{F''(\dot{u})}\quad\text{a.e. on}\,\,(0,1),
\]
together with $\dot{u}(0)=\dot{u}(1)=0$, i.e. $u$ solves the boundary value problem (\ref{ode}), which was the statement of part c). \qed
\end{section}

\begin{section}{Proof of Theorem \ref{Thm1.2}}\label{section2}
Let us assume the validity of the hypotheses of Theorem \ref{Thm1.2}. We start with the

\noindent\textbf{\textit{proof of part a)}}. That in fact the functional $K$ from (\ref{K}) admits a unique minimizer $u\in BV(0,1)$ is straightforward in the framework of the theory of $BV$-functions (see e.g. \cite{AFP}, Theorem 3.23, p. 132 as well as  Remark 5.46 and Theorem 5.47 on p. 303/304). The justification that we have $0\leq u\leq 1$ a.e. on $(0,1)$ follows by a truncation argument (see \cite{BF3} in the case of pure denoising and \cite{BF4}). For later purposes we like to show that the minimizer $u$ can also be obtained as the limit of the regularizing sequence introduced in Lemma \ref{Lem_ud} giving $0\leq u\leq 1$ as a byproduct of Lemma \ref{Lem_ud} a): as done there, we study the problem
\[
 J_{\delta}[w]:=\frac{\delta}{2}\int\limits_{0}^{1}{|\dot{w}|^2dx}+ J[w], \quad w\in W^{1,2}(0,1),
\]
where in particular it holds $0\leq\ud\leq1$ for all $t\in[0,1]$ (see Lemma \ref{Lem_ud}, a)). Next we show that $\ud\rightarrow u$ in $L^1(0,1)$ and a.e. at least for a subsequence. First, by Lemma \ref{Lem_ud} c), there exists $\widetilde{u}\in BV(0,1)$ such that (for a subsequence) $\ud\rightarrow\widetilde{u}$ in $L^1(0,1)$. By lower semicontinuity we have 
\[
 K[\widetilde{u}]\leq \liminf\limits_{\delta\downarrow0}{J[\ud]},
\]
which yields by using the $K$-minimality of $u$
\[
 K[u]\leq  K[\widetilde{u}]\leq\liminf\limits_{\delta\downarrow0}{J_{\delta}[\ud]}.
\]
As in the proof of Theorem \ref{Thm1.1} b) we approximate the function $u$ by a sequence of smooth functions $(u_m)\subset C^{\infty}(0,1)$ such that (as $m\rightarrow\infty$)
\[
 u_m\rightarrow u\quad\text{in}\,\,L^1(0,1),\quad \int\limits_{0}^{1}{\sqrt{1+\dot{u}_m^2}\dt}\rightarrow\int\limits_{0}^{1}{\sqrt{1+|Du|^2}},
\]
and observe $u_m\rightarrow u\quad\text{in}\,\,L^p(0,1)$ for each finite $p>1$. Since $K$ is continuous with respect to the above notion of convergence  we obtain $K[u_m]\rightarrow K[u]$ as $m\rightarrow\infty$. This implies by using the $J_{\delta}$-minimality of $\ud$
\[
 K[u]\leq  K[\widetilde{u}]\leq\liminf\limits_{\delta\downarrow0}{J_{\delta}[\ud]}\leq \liminf\limits_{\delta\downarrow0}{J_{\delta}[u_m]}=J[u_m]=K[u_m].
\]
Thus, after passing to the limit $m\rightarrow\infty$, it follows
\[
  K[u]\leq  K[\widetilde{u}]\leq K[u],
\]
which implies $u=\widetilde{u}$ by the uniqueness of the $K$-minimizer and hence $0\leq u\leq 1$ a.e. on $(0,1)$. 

\noindent\textbf{\textit{Proof of part b)}}. With $\sigma_{\delta}$ as defined in the proof of Theorem \ref{Thm1.1} (see (\ref{defsig})), we recall that we have (\ref{Thm11})--(\ref{Thm13}) at hand. Note that at this stage no bound on $\lambda$ was necessary. Thus, there exists $\sigma\in W^{1,\infty}(0,1)$ with $\sigma_{\delta}\rightrightarrows\sigma$ as $\delta\downarrow0$ (at least for a subsequence). Moreover
\begin{align}
\label{Thm121} \begin{cases}
\dot{\sigma}=\lambda(u-f)\text{ and thus }|\dot{\sigma}(t)|\leq\lambda\quad\text{a.e.},\\
|\sigma(t)|\leq\lambda\quad\text{on}\,\,[0,1],\\
\sigma(0)=\sigma(1)=0.               
\end{cases}
\end{align}
In accordance with \cite{FT}, Theorem 1.3 (in the case of pure denoising), $\sigma$ is the unique solution of the dual problem associated to (\ref{J}) and it holds
\begin{align}
 \label{Thm122} \sigma=F'(\dot{u})\quad\text{a.e.},
\end{align}
where $u$ is the unique solution of problem (\ref{K}) in the class $BV(0,1)$ and $\dot{u}$ in the following denotes the Lebesgue point representative of the density of the absolutely continuous part $D^au$ of the measure $Du$. Thus, there is a null set $A\subset(0,1)$ such that we have (see (\ref{Thm122}))
\begin{align}
 \label{Thm123} \sigma(t)=F'(\dot{u}(t)),\quad t\in(0,1)-A.
\end{align}
Let us fix $t_0\in(0,1)-A$. Then it holds $|\sigma(t_0)|<\lambda_{\infty}$ and since $\sigma$ is continuous (recall (\ref{Thm121})), there exists $\eps>0$ with
\begin{align}
 \label{Thm124} |\sigma(t)|\leq \lambda_{\infty}-\alpha\quad\text{for all}\,\,t\in[t_0-\eps,t_0+\eps],
\end{align}
where $\alpha>0$ is chosen appropriately. Recalling $\sigma_{\delta}\rightrightarrows\sigma$, (\ref{Thm124}) yields for $\delta\leq\delta_{\eps}$
\begin{align}
 \label{Thm125} |\sigma_{\delta}(t)|\leq \lambda_{\infty}-\frac{\alpha}{2}\quad\text{for all}\,\,t\in[t_0-\eps,t_0+\eps].
\end{align}
Quoting Lemma \ref{inversefunction}, $(F'_{\delta})^{-1}$ is uniformly (with respect to $\delta$) bounded on the interval $\big[-\lambda_{\infty}+\frac{\alpha}{2},\lambda_{\infty}-\frac{\alpha}{2}\big]$. Hence, there exists a number $L>0$, independent of $\delta$, such that (compare (\ref{Thm16}))
\begin{align}
 \label{Thm126} \|\dot{u}_{\delta}\|_{L^{\infty}(t_0-\eps,t_0+\eps)}\leq L \text{ for all } \eps\leq\delta.
\end{align}
Since $u$ is the $L^1$-limit of the sequence $(\ud)$ (compare the proof of part a) of this theorem), (\ref{Thm126}) ensures
\[
 u\in C^{0,1}\big([t_0-\eps,t_0+\eps]\big).
\]
Further using the Euler equation (\ref{eueq}) for $\ud$ on $(t_0-\eps,t_0+\eps)$ we deduce
\[
 \ddot{u}_{\delta}=\lambda\frac{(\ud-f)}{F_{\delta}''(\dot{u}_{\delta})}\quad\text{a.e. on}\,\,(t_0-\eps,t_0+\eps),
\]
which yields the existence of a number $L'>0$, independent of $\delta$, such that
\begin{align}
\label{Thm127} \|\ddot{u}_{\delta}\|_{L^{\infty}(t_0-\eps,t_0+\eps)}\leq L'.
\end{align}
From (\ref{Thm127}), it finally follows
\[
 u\in C^{1,1}\big([t_0-\eps,t_0+\eps]\big),
\]
and this shows that $u$ is of class $C^{1,1}$ in a neighbourhood of a point $t\in (0,1)$ if and only if $t$ is a Lebesgue point of $\dot{u}$.
Recalling (\ref{Thm121}) we can conclude that (\ref{Thm124}) (which by the way implies (\ref{Thm126}) and (\ref{Thm127})) is true on a suitable interval $[0,t_1]$. This can be achieved by setting $t_1<\sup\{s\in[0,1]:\,|\sigma(s)|<\lambda_{\infty}\}$, for instance. Hence, $u\in C^{1,1}\big([0,t_1]\big)$. Using analogous arguments we can show existence of a number $t_2$ for which we have $0<t_1\leq t_2<1$ and such that $u\in C^{1,1}([t_2,1])$. This proves part b) of the theorem.

\noindent\textbf{\textit{Proof of part c)}}. Our strategy is to prove $\ud\in W^{1,2}_{\text{loc}}(a,b)$ uniformly with respect to $\delta$. With this result at hand along with the fact that the $K$-minimizing function $u\in BV(0,1)$ is obtained as the limit of the sequence $(\ud)$, we see that $u\in BV(a,b)\cap W^{1,2}_{\text{loc}}(a,b)$, thus $u\in W^{1,1}(a,b)$. First, we recall $\ud\in W^{2,2}_{\text{loc}}(0,1)$ (compare Lemma \ref{Lem_ud}) and $F'_{\delta}(\dot{u}_{\delta})$ is of class $W^{1,2}_{\text{loc}}(0,1)$ satisfying
\[
 (F'_{\delta}(\dot{u}_{\delta}))'=F''_{\delta}(\dot{u}_{\delta})\ddot{u}_{\delta}\quad\text{a.e. on}\,\,(0,1).
\]
From (\ref{eueq}) we therefore get
\begin{align}
\label{Thm128} \int\limits_{0}^{1}{F''_{\delta}(\dot{u}_{\delta})\ddot{u}_{\delta}\dot{\varphi}\dt}=\lambda\int\limits_{0}^{1}{(\ud-f)\dot{\varphi}\dt}
\end{align}
for all $\varphi\in C^{\infty}_0(0,1)$ and by approximation, (\ref{Thm128}) remains valid for functions $\varphi\in W^{1,2}(0,1)$ that are compactly supported in $(0,1)$.
Next, we fix a point $x_0\in(a,b)$, a number $R>0$ such that $(x_0-2R,x_0+2R)\Subset(a,b)$ and $\eta\in C^{\infty}_0(x_0-2R,x_0+2R)$ with $\eta\equiv1$ on $(x_0-R,x_0+R)$, $0\leq\eta\leq1$ as well as $|\dot{\eta}|\leq\frac{c}{R}$. We choose $\varphi:=\eta^2\dot{u}_{\delta}$ in (\ref{Thm128}) and obtain
\begin{align}
 \label{Thm129}\begin{split} I_0:&=\int\limits_{x_0-2R}^{x_0+2R}{F''_{\delta}(\dot{u}_{\delta})(\ddot{u}_{\delta})^2\eta^2\dt}\\
&=-2\int\limits_{x_0-2R}^{x_0+2R}{F''_{\delta}(\dot{u}_{\delta})\ddot{u}_{\delta}\dot{u}_{\delta}\dot{\eta}\eta \dt}+\lambda\int\limits_{x_0-2R}^{x_0+2R}{(\ud-f)\dot{\varphi}\dt}\\
&=:I_1+\lambda I_2.              
\end{split}
\end{align}
We start with estimating $I_1$ where, by using Young's inequality for fixed $\eps>0$, we get
\begin{align}
\label{Thm1210} |I_1|\leq\eps I_0+c\eps^{-1}\int\limits_{x_0-2R}^{x_0+2R}{F''_{\delta}(\dot{u}_{\delta})\dot{u}_{\delta}^2\dot{\eta}^2\dt}.
\end{align}
An integration by parts (recall $f\in W^{1,2}_{\text{loc}}(a,b)$) further gives for $I_2$
\begin{align}
 \label{Thm1211} I_2=-\int\limits_{x_0-2R}^{x_0+2R}{(\dot{u}_{\delta}-\dot{f})\dot{u}_{\delta}\eta^2\dt}=-\int\limits_{x_0-2R}^{x_0+2R}{\dot{u}_{\delta}^2\eta^2\dt}+\int\limits_{x_0-2R}^{x_0+2R}{\dot{f}\dot{u}_{\delta}\eta^2\dt}.
\end{align}
Putting together (\ref{Thm1210}) and (\ref{Thm1211}) and absorbing terms (we choose $\eps>0$ sufficiently small), (\ref{Thm129}) implies
\begin{align}
 \label{Thm1212} \begin{split}
&\int\limits_{x_0-2R}^{x_0+2R}{F''_{\delta}(\dot{u}_{\delta})(\ddot{u}_{\delta})^2\eta^2\dt}+\lambda \int\limits_{x_0-2R}^{x_0+2R}{\dot{u}_{\delta}^2\eta^2\dt}\\
&\leq c\int\limits_{x_0-2R}^{x_0+2R}{F''_{\delta}(\dot{u}_{\delta})\dot{u}_{\delta}^2\dot{\eta}^2\dt}+c\int\limits_{x_0-2R}^{x_0+2R}{|\dot{f}||\dot{u}_{\delta}|\eta^2\dt}.                  
\end{split}
\end{align}
The first integral on the right-hand side of (\ref{Thm1212}) can be handled by the uniform estimate $J_{\delta}[\ud]\leq J[0]$, the linear growth of $F$ and condition (\ref{F5}). More precisely we get
\[
 \int\limits_{x_0-2R}^{x_0+2R}{F''_{\delta}(\dot{u}_{\delta})\dot{u}_{\delta}^2\dot{\eta}^2\dt}\leq c(R)\int\limits_{x_0-2R}^{x_0+2R}{(\delta+(1+\dot{u}_{\delta}^2)^{-\frac{1}{2}})\dot{u}_{\delta}^2\dt}\leq c(R),
\]
where $c(R)$ denotes a local constant being independent of $\delta$. To the second integral  we apply Young's inequality ($\eps>0$) which yields
\[
 \int\limits_{x_0-2R}^{x_0+2R}{|\dot{f}||\dot{u}_{\delta}|\eta^2\dt}\leq \eps\int\limits_{x_0-2R}^{x_0+2R}{\dot{u}_{\delta}^2\eta^2\dt}+c\eps^{-1}\int\limits_{x_0-2R}^{x_0+2R}{\dot{f}^2\eta^2\dt}
\]
Absorbing terms by choosing $\eps>0$ sufficiently small, (\ref{Thm1212}) yields (recall $\eta\equiv1$ on $(x_0-R,x_0+R)$ and $f\in W^{1,2}_{\text{loc}}(a,b)$ once again)
\begin{align}
 \label{Thm1213} \int\limits_{x_0-R}^{x_0+R}{F''_{\delta}(\dot{u}_{\delta})(\ddot{u}_{\delta})^2\dt}+\lambda \int\limits_{x_0-R}^{x_0+R}{\dot{u}_{\delta}^2\dt}\leq c(f,R),
\end{align}
where $c(f,R)$ is a local constant, independent of $\delta$. This proves
\[ 
\ud\in W^{1,2}(x_0-R,x_0+R) \text{ uniformly with respect to $\delta$ }
\]
and part c) of the theorem now follows from a covering argument. \qed

\begin{Rem}\label{singularsigma}
From the proof of part b) we see how the singular set $\mathrm{Sing}(u):=[0,1]-\mathrm{Reg}(u)$ can be given in terms of $\sigma$: due to (\ref{Thm122}), we have $|\sigma(t)|<\lambda_\infty$ at almost all points $t\in[0,1]$ and thus, since $\sigma$ is continuous it holds
\[
 -\lambda_\infty\leq\sigma(t)\leq\lambda_\infty\;\text{ for all $t\in[0,1]$}.
\]
We claim that $\mathrm{Sing}(u)$ is exactly the set of points where $|\sigma|$ attains the maximal value $\lambda_\infty$, i.e.
\[
 \mathrm{Sing}(u)=\{t\in[0,1]\,:\,|\sigma(t)|=\lambda_\infty\}.
\]
Indeed, let $t_0\in[0,1]$ be a regular point of $u$, i.e. there is a small neighbourhood $(t_0-\eps,t_0+\eps)$ of $t_0$ such that $u$ is of class $C^{1,1}(t_0-\eps,t_0+\eps)$. Hence $|\dot{u}|$ is bounded on $(t_0-\eps,t_0+\eps)$ and  (\ref{Thm123}) along with the continuity of $\sigma$ implies $|\sigma(t_0)|<\lambda_\infty$. Conversely, if $s_0\in[0,1]$ is a point where $|\sigma(s_0)|<\lambda_\infty$ the arguments after (\ref{Thm123}) show that $s_0$ is a regular point.
\end{Rem}

\end{section}

\begin{section}{Proof of Theorem \ref{Thm1.4}}
\noindent\textbf{\textit{Proof of part a)}} Without loss of generality we will in the following identify $f$ with the representative that is continuous in $t_0$. Moreover, we recall that we consider the ``good`` representative of $u$ as specified in the introduction around the formula (\ref{Du}). Assume that the statement is false, i.e. the left- and the right limit of $u$ at $t_0$,
\begin{align*}
 u^-(t_0):=\lim\limits_{t_k\uparrow t_0}u(t_k),\quad u^+(t_0):=\lim\limits_{t_k\downarrow t_0}u(t_k)
\end{align*}
do not coincide. We may assume
\begin{align}\label{assumption}
 u^-(t_0)<f(t_0)\quad\text{ and }\quad u^+(t_0)\geq f(t_0),
\end{align}
and it will be clear from the proof, that all the other possible cases can be treated analogously. Let $h_0:=u^+(t_0)-u^-(t_0)$ denote the jump-height at $t_0$. Then, from (\ref{assumption}) it follows in particular that there exist $\eps>0$ and $0<d<h_0$ such that
\begin{align*}
 u(t)<f(t)-d\text{ for all }t\in [t_0-\eps,t_0].
\end{align*}
We may further assume that $u$ is continuous at $t_0-\eps$. Now define $\tilde{u}$ by 
\begin{align*}
 \tilde{u}(t):=u(t)+d\chi_{[t_0-\eps,t_0]}(t).
\end{align*}
That means, on $[t_0-\eps,t_0]$ we ''move'' $u$ a little closer to $f$ so that in particular
\begin{align}\label{errorterm}
 \intop_0^1(\tilde{u}-f)^2\,dt<\intop_0^1(u-f)^2\,dt.
\end{align}
Let us write (compare (\ref{Du})) $Du=\dot u\mathcal{L}^1+\sum_{k=0}^\infty h_k\delta_{x_k}+D^cu$, where $\{t_k\}_{k=0}^\infty$ is the jump-set of $u$. Clearly, $\tilde{u}\in BV(0,1)$ and it holds
\begin{align*}
 D\tilde{u}=\dot{u}\mathcal{L}^1+(h_0-d)\delta_{t_0}+d\delta_{t_0-\eps}+\sum_{k=1}^\infty h_k\delta_{x_k}+D^cu
\end{align*}
and in conclusion
\begin{align*}
K[\tilde{u}]=\intop_0^1F(\dot{u})\dt&+\lambda_\infty\left(|h_0-d|+d+\sum_{k=1}^\infty |h_k|\right)\\
&+\lambda_\infty|D^cu|(0,1)+\frac{\lambda}{2}\intop_0^1(\tilde{u}-f)^2\,dt.
\end{align*}
Since $d<h_0$ and due to (\ref{errorterm}) this implies
\[
 K[\tilde{u}]<K[u],
\]
in contradiction to the minimality of $u$.

\noindent\textbf{\textit{Proof of part b)}}. First we notice, that due to Theorem \ref{Thm1.2} part b) there are $s_1$ and $s_2$ in $(a,b)$, arbitrarily close to $a$ and $b$ respectively with $s_1<s_2$ and such that $u$ is $C^{1,1}$-regular in a small neighbourhood of $s_1$ and $s_2$. Hence, the singular set 
\[
 S:=\text{Sing}(u)\cap[s_1,s_2]
\]
is a compact subset of $(s_1,s_2)$. Moreover, by part a) of Theorem \ref{Thm1.4} we have $u\in C^0(a,b)$. Assume $S\neq\emptyset$. Then there exists $\overline{s}:=\inf S>a$ which is an element of $S$ itself since the singular set is closed. In particular, $\sigma(\overline{s})=\pm\lambda_\infty$ (cf. Remark \ref{singularsigma}), i.e. $\sigma$ has a maximum respectively minimum in $\overline{s}$ and since $\dot{\sigma}=\lambda (u-f)\in C^0(a,b)$ it follows 
\[
 \dot{\sigma}(\overline{s})=0
\]
which means 
\begin{align}\label{u=f}
 u(\overline{s})=f(\overline{s}).
\end{align}
Without loss of generality we may assume $\sigma(\overline{s})=\lambda_\infty$. Since $\sigma$ is continuous in $\overline{s}$, for any sequence $t_k\uparrow \overline{s}$ approaching $\overline{s}$ from the left it must hold $\sigma(t_k)\rightarrow \lambda_\infty$ and thus, because of $\dot{u}=DF^{-1}(\sigma)$,
\begin{align}\label{stern}
 \dot{u}(t_k)\rightarrow\infty\;\text{ for any sequence }t_k\uparrow \overline{s}.
\end{align}
In particular, for arbitrary $M>0$ there exists $\eps>0$ such that 
\begin{align}\label{M}
\dot{u}(t)>M \text{ for } t\in [\overline{s}-\eps,\overline{s}).
\end{align}
Now choose $M:=\|\dot{f}\|_{\infty;[s_1,s_2]}$ in (\ref{M}). Then $\frac{d}{dt}(u-f)>0$ on $[\overline{s}-\eps,\overline{s})$, which is not compatible with (\ref{u=f}) unless $u-f<0$ on $[\overline{s}-\eps,\overline{s})$. But in this case, the differential equation
\begin{align}
 \ddot{u}=\lambda\frac{u-f}{F''(\dot{u})}\text{ a.e. on }[\overline{s}-\eps,\overline{s})\label{difgl}
\end{align}
  implies that $\dot{u}$ is strictly decreasing on $[\overline{s}-\eps,\overline{s})$ and thereby $\dot{u}(\overline{s}-\eps)\geq \dot{u}(s)$ for all $s\in[\overline{s}-\eps,\overline{s})$ which is inconsistent with (\ref{stern}). This shows $\mathrm{Sing}(u)\cap (a,b)=\emptyset$ by contradiction and hence $u\in C^1(a,b)$. Moreover, since $\sigma$ is locally bounded away from $\lambda_\infty$ we even have $u\in W^{2,\infty}_{\mathrm{loc}}(a,b)$. Hence (\ref{difgl}) holds at almost all points of $(a,b)$ and by the continuity of $\dot{u}$, the right-hand side of (\ref{difgl}) is continuous. It therefore follows that $u\in C^2(a,b)$.
We proceed with the 

\noindent\textbf{\textit{proof of part c)}}. As already mentioned, the auxiliary quantity $\sigma$ that has been introduced in the proof of Theorem \ref{Thm1.2} has an independent meaning as the solution of the dual problem to $J\rightarrow\min$. As e.g. in \cite{BF3} or \cite{FT}, we obtain the dual problem from the Lagrangian given by
\begin{align*}
 L(v,\kappa):=\intop_{0}^1\kappa\dot{v}\dt-\intop_{0}^1F^*(\kappa)\dt+\underset{\mbox{$=:\Psi(v)$}}{\underbrace{\frac{\lambda}{2}\intop_0^1(v-f)^2\dt}},
\end{align*}
where $(v,\kappa)\in W^{1,1}(0,1)\times L^\infty(0,1)$,
\[
F^*(\kappa):=\sup\limits_{w\in L^1(0,1)}\big(\langle\kappa,w\rangle-F(w)\big)
\]
 is the  convex conjugate and 
\[
\langle\kappa,w\rangle:=\intop_0^1\kappa w\dt
\]
denotes the duality product of $L^1(0,1)$ and $L^\infty(0,1)$. By standard results from convex analysis (see e.g. \cite{ET}, Remark 3.1 on p. 56), the functional $J$ can be expressed in terms of the Lagrangian by $J[v]=\sup\limits_{\kappa\in L^\infty(0,1)}L(v,\kappa)$ and 
\begin{align*}
 R[\kappa]:=\inf\limits_{v\in W^{1,1}(0,1)}L(v,\kappa),\;\kappa\in L^\infty(0,1)
\end{align*}
is called the dual functional. The dual problem consists in maximizing $R[\kappa]$ in $L^\infty(0,1)$. Obviously, $\kappa:=\sigma$ is an admissible choice and since $\sigma\in W^{1,\infty}(0,1)$ along with $\sigma(0)=F'(\dot{u}(0))=0=F'(\dot{u}(1))=\sigma(1)$ (cf. (\ref{Thm121})), we can integrate by parts and derive the following integral representation of the dual functional (cf. also Theorem 9.8.1 on p. 366 in \cite{ABM}):
\begin{align*}
 R[\sigma]&=\inf_{v\in W^{1,1}(0,1)}\intop_{0}^1\sigma\dot{v}\dt-\intop_{0}^1F^*(\sigma)\dt+\Psi(v)\\
 &=-\intop_{0}^1F^*(\sigma)\dt-\sup_{v\in W^{1,1}(0,1)}\left(-\intop_{0}^1\sigma\dot{v}\dt-\Psi(v)\right)\\
 &=-\intop_{0}^1F^*(\sigma)\dt-\sup_{v\in W^{1,1}(0,1)}\left(\intop_{0}^1\dot{\sigma}v\dt-\Psi(v)\right)\\
 &=-\intop_{0}^1F^*(\sigma)\dt-\Psi^*(\dot{\sigma}).
\end{align*}
Next, we want to compute  $\Psi^*(\dot{\sigma})$. By definition we have
\begin{align*}
 \Psi^*(\dot{\sigma})&=\sup_{v\in W^{1,1}(0,1)}\left(\langle v,\dot{\sigma}\rangle-\frac{\lambda}{2}\langle v-f,v-f\rangle\right)\\
 &= \sup_{v\in W^{1,1}(0,1)}\left\langle v,\dot{\sigma} -\frac{\lambda}{2}v+\lambda f\right\rangle-\frac{\lambda}{2}\langle f,f\rangle.\\   
\end{align*}
Applying Hölder's inequality, we get
\begin{align}\label{holder}
 \left\langle v,\dot{\sigma} -\frac{\lambda}{2}v+\lambda f\right\rangle\leq -\frac{\lambda}{2}\|v\|_{2}^2+\|\dot{\sigma}+\lambda f\|_{2}\|v\|_2
\end{align}
and by elementary calculus, the right-hand side is maximal for $\|v\|_2=\|\frac{\dot{\sigma}}{\lambda}+f\|_2$. An easy computation confirms, that for the choice $v=\frac{\dot{\sigma}}{\lambda}+f$ the  left-hand side of (\ref{holder}) attains this maximal value and it follows
\begin{align*}
 \Psi^*(\dot{\sigma})&=\intop_{0}^1\left(\frac{\dot{\sigma}}{\lambda}+f\right)\dot{\sigma}\dt-\frac{\lambda}{2}\intop_0^1\left(\frac{\dot{\sigma}}{\lambda}+f\right)^2\dt\\
 &=\intop_0^1\frac{\dot{\sigma}^2}{2\lambda}+\dot{\sigma} f\dt.
\end{align*}
Thereby we obtain for $R[\sigma]$
\begin{align}\label{intrepofR}
 R[\sigma]=-\intop_0^1\frac{\dot{\sigma}^2}{2\lambda}+\dot{\sigma} f\dt-\intop_0^1F^*(\sigma)\dt.
\end{align}
Now assume that $\mathrm{Sing}(u)\neq 0$. By Remark \ref{singularsigma}, this means that there exists at least one point $t\in [0,1]$ where $\sigma(t)=\pm\lambda_\infty$. Let $\hat{t}$ denote the smallest such $t$. Since $\sigma(0)=0$ it follows $\hat{t}>0$ and without loss of generality we may assume $\sigma(\hat{t})=\lambda_\infty$. Let $\varphi\in C_0^\infty\big([0,\hat{t})\big)$ be an arbitrary test function. On $[0,\hat{t})$ it holds $|\sigma|<\lambda_\infty$ and since $\mathrm{spt}\,\varphi$ is a compact subset of $[0,\hat{t})$ (and $\sigma$ is continuous) there exists $\eps_0=\eps_0(\varphi)$ such that $|\sigma(t)+\eps\varphi(t)|\leq\lambda_\infty-\delta$ for some $\delta>0$ and for all $0\leq \eps<\eps_0$. By Theorem 26.4  and Corollary 26.4.1 in \cite{Ro}, $F^*$ is  finite and continuously differentiable on $(-\lambda_\infty,\lambda_\infty)$ (with derivative $(F^*)'=(F')^{-1}$) and hence
\[
 \left.\frac{d}{d\eps}\right|_{\eps=0}F^*(\sigma(t)+\eps\varphi(t))=(F^*)'(\sigma(t))\varphi(t)\in L^1(0,\hat{t}),
\]
which together with (\ref{intrepofR}) and the maximality of $\sigma$ implies that the following Euler equation must hold for all $\varphi\in C^\infty_0(0,\hat{t})$:
\begin{align}\label{eulersigma}
 -\intop_0^1\frac{\dot{\sigma}}{\lambda}\dot{\varphi}+f\dot{\varphi}\dt-\intop_0^1(F^*)'(\sigma)\varphi\dt=0.
\end{align}
Since $[0,\hat{t})\subset\mathrm{Reg}(u)$ and $f\in W^{1,1}(0,1)$ by assumption, we have (see (\ref{Thm121}))
\begin{align}\label{sigmadot}
\dot{\sigma}=\lambda(u-f)\in W^{1,1}(0,\hat{t})
\end{align}  
and therefore $\sigma\in W^{2,1}(0,1)$, so that (\ref{eulersigma}) implies the following differential equation:
\begin{align}\label{dglsigma}
 \frac{\ddot{\sigma}}{\lambda}+\dot{f}-(F^*)'(\sigma)=0\,\text{ a.e. on }(0,\hat{t}).
\end{align}
Let $\{s_k\}\subset[0,\hat{t})$, $k\in\N$ denote a sequence with $s_k\uparrow\hat{t}$ as $k\rightarrow\infty$. Multiplying (\ref{dglsigma}) with $\dot{\sigma}$ and integrating by parts (recall $\dot{\sigma}\in W^{1,1}(0,\hat{t})$) then yields
\[
 \frac{\dot{\sigma}(s_k)^2}{2\lambda}-\frac{\dot{\sigma}(0)^2}{2\lambda}+\intop_0^{s_k}\dot{f}\dot{\sigma}\dt-F^*(\sigma_{s_k})=0.
\]
Since $\dot{\sigma}$ is bounded by $\lambda$, this implies the estimate
\begin{align}\label{F*}
 F^*(\sigma(s_k))<\lambda\left(\frac{1}{2}+\|\dot{f}\|_1\right)+\frac{\dot{\sigma}(s_k)^2}{2\lambda}.
\end{align}
In $\hat{t}$, $\sigma$ attains its maximum and since it is continuously differentiable on $(0,1)$ (this follows from (\ref{Thm121}) in combination with the fact, that $u$ is continuous on $(0,1)$ by part a) of Theorem \ref{Thm1.4}) it follows
\[
  \frac{\dot{\sigma}(s_k)^2}{2\lambda}\rightarrow 0\text{ for }k\rightarrow\infty
\]
and thereby
\begin{align}\label{contra}
\lim_{k\rightarrow\infty} F^*(\sigma(s_k))\leq \lambda\left(\frac{1}{2}+\|\dot{f}\|_1\right).
\end{align}
But the following calculation shows (see also figure \ref{fig2}), that the limit on the left-hand side is just the quantity $\omega_\infty$ from the assumptions of part c):
\begin{figure}[!ht]
 \centering
 \includegraphics[scale=1]{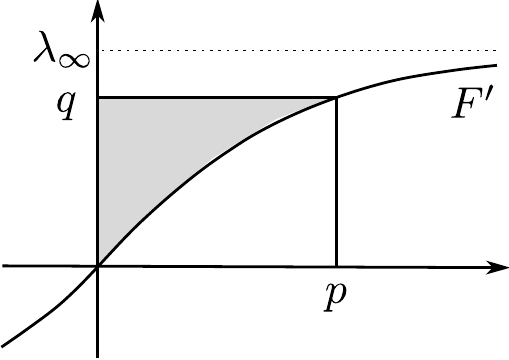}
 \caption{$\intop_{0}^q (F')^{-1}(t)\dt=pq-\intop_{0}^{p} F'(t)\dt$}\label{fig2}
\end{figure}

\begin{align*}
 \lim_{q\uparrow\lambda_\infty}F^*(q)=\lim_{q\uparrow\lambda_\infty}\intop_{0}^q (F^*)'(t)\dt=\lim_{q\uparrow\lambda_\infty}\intop_{0}^q (F')^{-1}(t)\dt
\end{align*}
\begin{align*}
 =\lim_{q\uparrow\lambda_\infty}q(F')^{-1}(q)-\intop_{0}^{(F')^{-1}(q)} F'(t)\dt\overset{p:=(F')^{-1}(q)}{=}\lim_{p\uparrow\infty}p F'(p)-F(p).
\end{align*}
Hence (\ref{contra}) is in contradiction to our requirements on $f$ and $\lambda$, thus our assumption $\mathrm{Sing}(u)\neq\emptyset$ is false. \qed
\end{section}

\begin{section}{Proof of Theorem \ref{Thm1.5}}
Let all the assumptions of Theorem \ref{Thm1.5} hold. In the following, we make the dependence of the minimizer on the parameter $\lambda$ more explicit by denoting with $u_\lambda$  the unique solution of problem (\ref{K}) for a given value $\lambda>0$.
Thanks to Theorems \ref{Thm1.2} and \ref{Thm1.4} we have the following properties:
\begin{enumerate}[(i)]
 \item $u_\lambda\in C^2\big([0,1]-\{\frac{1}{2}\}\big)$ (cf. Theorem 1.3 b)), $0\leq u_\lambda\leq 1$ a.e. and $u_\lambda$ satisfies
\begin{align*}
\left\{\begin{aligned}
\ddot{u}_\lambda=\lambda \frac{u_\lambda}{F''(\dot{u}_\lambda)},\,\dot{u}_\lambda(0)=0, &\text{ on }[0,\nicefrac{1}{2})\quad \mathrm{(1)},\\
\ddot{u}_\lambda=\lambda \frac{1-u_\lambda}{F''(\dul)},\,\dot{u}_\lambda(1)=0, &\text{ on }(\nicefrac{1}{2},1]\quad \mathrm{(2)},
\end{aligned}\right.
\end{align*}
\item \label{ii} $\ddot{u}_\lambda\geq 0$ on $[0,\frac{1}{2})$ and hence $\dot{u}_\lambda$ increases on $[0,\frac{1}{2})$; $\ddot{u}_\lambda\leq 0$ on $(\frac{1}{2},1]$ and hence $\dot{u}_\lambda$ decreases on $[0,\frac{1}{2})$,
\item $\dot{u}_\lambda\geq 0$ on $[0,\frac{1}{2})$ (due to $\dot{u}_\lambda(0)=0$ and (\ref{ii})) and hence $u_\lambda$ increases on $[0,\frac{1}{2})$.

\hspace{-1cm}\begin{minipage}{\textwidth}\vspace{0.3cm} Furthermore, we observe that the symmetry of our data $f$ with respect to the 
point $(\nicefrac{1}{2},\nicefrac{1}{2})$ is reproduced by $u_\lambda$:\end{minipage}
\item The two continuous branches of $u_\lambda$, $u_\lambda|_{[0,\frac{1}{2})}$ and  $u_\lambda|_{(\frac{1}{2},1]}$ are symmetric with respect to the point $(\frac{1}{2},\frac{1}{2})$, i.e.
\[
u_\lambda(t)=\underset{\mbox{$=:\tilde{u}_\lambda(t)$}}{\underbrace{1-u_\lambda(1-t)}},\, t\in[0,1]-\left\{\nicefrac{1}{2}\right\}
\]
\end{enumerate}
\textit{Proof of (iv)}. We show $K[\tilde{u}_\lambda]=K[u_\lambda]$. The result then follows from the uniqueness of the $K$-minimizer in $BV(0,1)$ (Theorem \ref{Thm1.2} a)). Let 
\[
h:=\lim_{t\downarrow \frac{1}{2}}u_\lambda(u)-\lim_{t\uparrow \frac{1}{2}}u_\lambda(u)
\] 
denote the height of the (possible) jump of $u_\lambda$ at $t=\nicefrac{1}{2}$. Then the distributional derivative of $u_\lambda$ is given by
\[
Du_\lambda=D^au_\lambda+h\delta_{1/2}
\]
and thus
\begin{align*}
K[u_\lambda]&=\intop_0^\frac{1}{2}\dot{u}_\lambda\dt+\intop_\frac{1}{2}^1\dot{u}_\lambda\dt+\lambda_\infty|h\delta_{1/2}|(0,1)+\frac{\lambda}{2}\intop_0^1(u_\lambda-f)^2\dt\\
&=\intop_0^\frac{1}{2}\dot{u}_\lambda\dt+\intop_\frac{1}{2}^1\dot{u}_\lambda\dt+\frac{|h|}{\mu-1}+\frac{\lambda}{2}\intop_0^1(u_\lambda-f)^2\dt.
\end{align*}
For $\tilde{u}_\lambda$ we obtain
\begin{align*}
&K[\tilde{u}_\lambda]\\
&=\intop_0^\frac{1}{2}\dot{u}_\lambda(1-t)\dt+\intop_\frac{1}{2}^1\dot{u}_\lambda(1-t)\dt+\lambda_\infty|h\delta_{1/2}|(0,1)+\frac{\lambda}{2}\intop_0^1(\tilde{u}_\lambda-f)^2\dt\\
&=\intop_0^\frac{1}{2}\dot{u}_\lambda\dt+\intop_\frac{1}{2}^1\dot{u}_\lambda\dt+\frac{|h|}{\mu-1}+\frac{\lambda}{2}\intop_0^1(\tilde{u}_\lambda-f)^2\dt,
\end{align*}
but clearly $\intop_0^1(\tilde{u}_\lambda-f)^2\dt=\intop_0^1(u_\lambda-f)^2\dt$ and hence $K[\tilde{u}_\lambda]=K[u_\lambda]$.\qed

Finally, we note that the value of $u_\lambda(0)$ tends to zero as $\lambda\rightarrow\infty$:
\begin{enumerate}[(v)]
 \item  \hspace{5cm}$\lim_{\lambda\rightarrow\infty}u_\lambda(0)=0.$
\end{enumerate}
\textit{Proof of (v)}. Since $u_\lambda$ is $K$-minimal in $BV(0,1)$ and $f\in BV(0,1)$, it must hold
\[
 K[u_\lambda]\leq K[f]=\lambda_\infty|\delta_{1/2}|(0,1)=\lambda_\infty=\frac{1}{\mu-1},
\]
and thus, due to properties (iii) and (iv)
\[
 \frac{\lambda}{2}u_\lambda(0)^2=2\frac{\lambda}{2}\intop_0^\frac{1}{2}u_\lambda(0)^2\dt\leq\frac{\lambda}{2}\intop_0^{\frac{1}{2}}(u_\lambda-f)^2\dt\leq K[u_\lambda]\leq K[f]=\frac{1}{\mu-1},
\]
so that 
\begin{align}\label{u(0)}
u_\lambda(0)\leq \sqrt{\frac{2}{\lambda(\mu-1)}}\xrightarrow{\lambda\rightarrow\infty}0. 
\end{align}
\qed

By property (iv), the continuity of $u_\lambda$ necessarily implies $u_\lambda(1/2)=1/2$. We can exploit this fact to prove that the minimizer develops jumps once we can show, that starting from a certain value of the parameter $\lambda$, $u_\lambda$ is bounded away from $1/2$ on $[0,1/2)$. To this end we make use of equation (1) from property (i):
\begin{align*}
 &\;\;\quad\ddot{u}_\lambda(t)=\lambda \frac{u_\lambda(t)}{F''(\dul(t))}\\
&\Leftrightarrow F''(\dul(t))\ddot{u}_\lambda(t)=\lambda u_\lambda(t)\\
&\Leftrightarrow \frac{d}{dt}F'(\dul(t))\dul(t)=\lambda u_\lambda(t)\dot{u}_\lambda(t).
\end{align*}
Integrating the latter equation from $0$ to $s$ for some $s\in[0,\frac{1}{2})$ yields
\begin{align*}
 &\;\;\quad\intop_0^s \frac{d}{dt}F'(\dul(t))\dul(t)\dt=\intop_0^s\lambda u_\lambda(t)\dot{u}_\lambda(t)\dt\\
&\Leftrightarrow \bigg[F'(\dul(t))\dul(t)\bigg]_0^s-\intop_0^s\underset{\mbox{$=\frac{d}{dt}F'(\dul(t))$}}{\underbrace{F'(\dul(t))\ddot{u}_\lambda(t)}}\dt=\left[\frac{\lambda}{2}u_\lambda(t)^2\right]_0^s,
\end{align*}
and with $\dot{u}_\lambda(0)=0$ and $F'(0)=0$ we arrive at
\begin{align}\label{conlaw}
 \dul(s)F'(\dul(s))-F(\dul(s))=\frac{\lambda}{2}\big(u_\lambda(s)^2-u_\lambda(0)^2\big).
\end{align}
Note that (\ref{conlaw}) formally corresponds to a law of conservation if we interpret eq. (1) as the equation of motion of a particle of mass $1/\lambda$ under the influence of a time-independent exterior force.

The left-hand side of (\ref{conlaw}) is nonnegative due to the convexity of $F$ and we therefore get:
\begin{align}\label{fint}
 u_\lambda(s)=\sqrt{u_\lambda(0)^2+\frac{2}{\lambda}\big(\dul(s)F'(\dul(s))-F(\dul(s))\big)}\quad\text{ for }s\in\left[0,\nicefrac{1}{2}\right).
\end{align}
From (\ref{fint}) we see, that if the left-hand side of (\ref{conlaw}) is bounded, then due to property (iv) $\ul$ is bounded below $1/2$ if we choose $\lambda$ large enough. But for our density $F$ from (\ref{F=Phi}) it holds (see Remark \ref{remthm13} (iii))
\begin{align*}
 \lim_{p\rightarrow\infty} p F'(p)-F(p)=
\begin{cases}
\infty,&\text{ for }1< \mu\leq 2,\\
\frac{1}{(\mu-1)(\mu-2)}, &\text{ for }2<\mu, 
\end{cases}
\end{align*}
and for $\mu>2$, the latter equation together with (\ref{u(0)}) and (\ref{fint}) gives
\begin{align*}
 u_\lambda(s)\leq\sqrt{\frac{2}{\lambda(\mu-1)}+\frac{2}{\lambda(\mu-1)(\mu-2)}}=\sqrt{\frac{2}{\lambda(\mu-2)}}\text{ for }s\in [0,\nicefrac{1}{2})
\end{align*}
which implies $\sup \limits_{0\leq s<\nicefrac{1}{2}}u_\lambda(s)<\nicefrac{1}{2}$, if $\lambda$ satisfies (\ref{lambdagrenz}). The corresponding lower bound on the infimum follows by the symmetry property (iv). \qed

\noindent\textit{Proof of Corollary \ref{Cor1.2} b)}. We define the critical value of $\lambda$ by
\[
 \lambda_{\mathrm{crit}}:=\sup\{\lambda\,:\,u_\lambda\text{ is continuous }\}.
\]
First we note that any minimizer $u_\lambda$ (independently of $\lambda$) satisfies $0\leq u_\lambda\leq \nicefrac{1}{2}$ on $[0,\nicefrac{1}{2})$ since otherwise ``cutting-off'' at height $\nicefrac{1}{2}$ would yield a $BV$-function for which the functional $K$ has a strictly smaller value. Thus, (\ref{fint}) implies
\begin{align*}
\frac{1}{2}\geq\sqrt{u_\lambda(0)^2+\frac{2}{\lambda}\big(\dul(s)F'(\dul(s))-F(\dul(s))\big)}\quad\text{ for }s\in\left[0,\nicefrac{1}{2}\right).
\end{align*}
Passing to the limit $s\rightarrow\nicefrac{1}{2}$ consequently gives (remember $\dot{u}_\lambda(s)\rightarrow\infty$ as $s\rightarrow\nicefrac{1}{2}$ since $\lambda>\lambda_\mathrm{crit}$)
\[
 \frac{1}{2}\geq \sqrt{u_\lambda(0)^2+\frac{2}{\lambda}\omega_\infty}
\]
which implies
\[
u_\lambda(0)^2 \leq \frac{\lambda-8\omega_\infty}{4\lambda}
\]
and consequently $\lambda\geq 8\omega_\infty$. The upper bound on $\lambda_\mathrm{crit}$ follows just like in the proof of Theorem \ref{Thm1.5} from the estimate (\ref{u(0)}) (with general $\lambda_\infty$ in place of $1/(\mu-1)$) and (\ref{fint}). \qed
\end{section}

\begin{section}{Proof of Theorem \ref{Thm1.6}}
Under the assumptions of Theorem \ref{Thm1.6} we let 
\[
 \mathrm{Reg}(u):=\big\{t\in[0,1]\,:\, u\text{ is }C^{1,1}\text{ on a neighbourhood of }t\big\}.
\]
From Theorem \ref{Thm1.2} b) we deduce that $\mathrm{Sing}(u):=[0,1]-\mathrm{Reg}(u)$ is a compact subset of $(0,1)$. Assume that $\mathrm{Sing}(u)\neq \emptyset$ and let $s$ denote the first singular point, thus $u\in C^{1,1}\big([0,s)\big)$ and therefore it holds
\begin{align}\label{six}
 \ddot{u}F''(\dot{u})=\lambda (u-f)\text{ a.e. on }(0,s).
\end{align}
From (\ref{six}) we deduce (compare the derivation of (\ref{conlaw})) the validity of
\begin{align}\label{conlaw2}
 \dot{u}(t)F'(\dot{u}(t))-F(\dot{u}(t))=\frac{\lambda}{2}\big(u(t)^2-u(0)^2\big)-\intop_{0}^tf(\tau)\dot{u}(\tau)\,d\tau\text{ for }t\in [0,s).
\end{align}
Clearly (\ref{conlaw2}) implies ($\omega(p):=p F'(p)-F(p)$)
\begin{align}\label{eight}
 |\omega(\dot{u}(t))|\leq\frac{\lambda}{2}+|Du|(0,1),\;t\in[0,s),
\end{align}
on account of $0\leq u,f\leq 1$ a.e. on $(0,1)$. By the convexity of $F$ (together with $F(0)=0$) we see that $\omega\geq 0$, $\omega(0)=0$, moreover
\begin{align*}
 \omega(p)=\intop_0^p\omega'(q)\,dq=\intop_0^qqF''(q)\,dq,
\end{align*}
thus $\omega$ is increasing with 
\begin{align}\label{nine}
 \lim_{p\rightarrow\infty}\omega(p)=\infty,\quad \lim_{p\rightarrow-\infty}\omega(p)=\infty
\end{align}
which follows from (\ref{muell}) together with assumption (\ref{muin}). Since we assume that $s$ is the first singular point of $u$, it must hold
\[
 \lim_{k\rightarrow\infty}|\dot{u}(t_k)|=\infty
\]
for a suitable sequence $t_k\uparrow s$, since otherwise $|\sigma(s)|<\lambda_\infty$ and hence $s\in\mathrm{Reg}(u)$ (cf. Remark (\ref{singularsigma})). This contradicts (\ref{eight}) on account of (\ref{nine}). \qed

\noindent\textit{Proof of Corollary \ref{Cor1.2} a)}. $\mathrm{Sing}(u)\neq \emptyset$ follows exactly along the same lines since now we have (\ref{nine}) due to our assumption $\omega_\infty=\infty$.\qed
\end{section}

\begin{section}{Proof of Theorem \ref{Thm1.3}}\label{section3}
We essentially have to show that for $\lambda<\lambda_\mu$ the conditions of Theorem 6, p. 295, in \cite{Th2} are fulfilled. Without further explanation we will adopt the notation of this work. First of all, we notice that due to our restriction $0\leq f(t)\leq 1$ we have that $\alpha(t)\equiv 0$ and $\beta(t)\equiv 1$ is a trivial lower and upper solution of (\ref{ode}), respectively, since
\begin{align*}
 0\geq \lambda\frac{0-f}{F''(0)}\quad\text{ as well as }\; 0\leq \lambda\frac{1-f}{F''(0)}
\end{align*}
due to $\|f\|_\infty\leq 1$ and $F''> 0$. Secondly, the right hand side of the equation (\ref{ode}) can be rewritten as
\[
 \Phi(t,v,\dot{v})=\lambda\frac{v-f(t)}{F''(\dot{v})}
\]
where $\Phi(t,y,p):=\lambda\frac{y-f(t)}{F''(p)}$ is a Carathéodory function if $f$ is merely measurable. Moreover, by (\ref{muell}) we can estimate $\Phi$ by
\begin{align*}
 |\Phi(t,y,p)|\leq \frac{\lambda}{c_1} \big(1+|p|\big)^\mu 
\end{align*}
and hence, letting $h(p):=\frac{\lambda}{c_1}(1+|p|)^\mu$, $\overline{h}(p)\equiv 1$, $r(t):=\eps$  for some $\eps>0$ and choosing $\lambda$ in such a way that
\begin{align}\label{lammu}
\lambda<\frac{1}{c_1(1+K\eps)}\intop_{1}^\infty \frac{s\,ds}{(1+s)^\mu}, 
\end{align}
where $K$ denotes the quantity $\sup\{s/h(s)\,|\,s\in[1,\infty]\}$, we find that  for some $L>0$ large enough,  $\Phi$ satisfies the following \textit{Bernstein-Nagumo-Zwirner} condition (compare \cite{Th2}, Definition 4):
\begin{align*}
\left\{\begin{aligned}
 &|\Phi(t,y,p)|\leq h(|p|)\overline{h}(p)+r(t)\text{ for all }(t,y)\in[0,1]\times[0,1]\text{ and }\\
 &\intop_{1}^L\frac{s\,ds}{h(s)}>1+K\eps.
\end{aligned}\right.
\end{align*}
The boundary conditions are formulated as set conditions, i.e. $(v(0),\dot{v}(0))\in \mathcal{J}(0)$ and $(v(1),\dot{v}(1))\in \mathcal{J}(1)$ for some closed connected subsets $\mathcal{J}(0),\mathcal{J}(1)\subset[0,1]\times\R$. In our case, we can choose
\[
 \mathcal{J}(0)=\mathcal{J}(1)=[0,1]\times\{0\}
\]
which corresponds to our Neumann condition. The verification, that the sets $\mathcal{J}(0)=\mathcal{J}(1):=[0,1]\times\{0\}$ are of ``compatible type 1`` in the sense of Definition 14 in \cite{Th2} is straightforward. Let us further define the sets $S_0,S_1,S_2$ and $S_3$  according to Definition 15 in \cite{Th2} (see figure \ref{fig3} below). Then we have
\[
 \mathcal{J}(0)\cap\{S_0\cup S_2\}=\mathcal{J}(1)\cap\{S_1\cup S_3\}=\{(0,0),(0,1)\}\neq\emptyset.
\]
\begin{figure}[h]
\centering
 \includegraphics[scale=0.9]{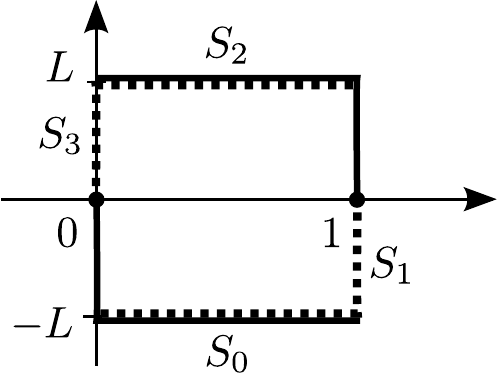}
 \caption{The sets $S_0,S_1,S_2$ and $S_3$.}\label{fig3}
\end{figure}
That is, all conditions of Theorem 4 are fulfilled and there is a solution $v\in W^{2,1}(0,1)$ of (\ref{ode}) with $0\leq v(t)\leq 1$ for almost all $t\in[0,1]$.
Note, that letting $\eps$ tend to zero in (\ref{lammu}) gives the postulated bound $\lambda_{\mu}$ for $\lambda$.

Let now $v\in W^{2,1}(0,1)$  be a solution of (\ref{ode}). We want to show, that $v$ coincides with the $K$-minimizer $u$ from Theorem \ref{Thm1.2}. By the convexity of the functional $J$, we see that for any $w\in C^{1,1}\big([0,1]\big)$ it holds 
\begin{align*}
 J[w]\geq J[v]+\langle DJ[v],w-v\rangle
\end{align*}
with
\[
 \langle DJ[v],w-v\rangle=\int\limits_{0}^{1}{F'(\dot{v})(\dot{w}-\dot{v})\dt}+\lambda\int\limits_{0}^{1}{(v-f)(w-v)\dt}.
\]
On account of $F'(0)=0$ we have
\begin{align*}
\int\limits_{0}^{1}{F'(\dot{v})(\dot{w}-\dot{v})\dt}&=\int\limits_{0}^{1}{\frac{d}{\dt}\big[F'(\dot{v})(w-v)\big]\dt}-\int\limits_{0}^{1}{F''(\dot{v})\ddot{v}(w-v)\dt}\\
 &=-\int\limits_{0}^{1}{F''(\dot{v})\ddot{v}(w-v)\dt}.
\end{align*}
By assumption, $v$ solves (\ref{ode}) a.e. on $(0,1)$, which implies
\begin{align}
\label{Thm17}  \langle DJ[v],w-v\rangle=\int\limits_{0}^{1}{(w-v)\big[F''(\dot{v})\ddot{v}-\lambda(v-f)\big]\dt}=0
\end{align}
for all $w\in C^{1,1}\big([0,1]\big)$. Thus, we get $J[v]\leq J[w]$ for all $w\in C^{1,1}\big([0,1]\big)$.
Now let $u$ denote the minimizer of $K$ in $BV(0,1)$. We can construct a sequence $u_k\in C^\infty\big([0,1]\big)$ such that
\[
 |Du_k|(0,1)\xrightarrow{k\rightarrow\infty} |Du|(0,1),\quad u_k\rightarrow u\text{ in }L^1(0,1),
\]
as well as
\[
 \sqrt{1+|Du_k|^2}(0,1)=\xrightarrow{k\rightarrow\infty}  \sqrt{1+| Du|^2}(0,1).
\]
To see this, consider 
\begin{align}
\hat{u}:\R\rightarrow[0,1],\; \hat{u}(t):=
 \begin{cases}
  u(0),\, &t\leq 0\\
  u(t),\, &0\leq t\leq 1\\
  u(1),\, &t\geq 1.
 \end{cases},
\end{align}
Since $u$ is of class $C^{1,1}$ near $0$ and $1$, it follows $\hat{u}\in BV_{\mathrm{loc}}(\R)$ and 
\begin{align}\label{Du0}
|D\hat{u}|(\{0\})=|D\hat{u}|(\{1\})=0.
\end{align}
Let $\eta\in C^\infty_0(\R)$ be a cut-off function such that $\eta\equiv 1$ on $[0,1]$ and consider a symmetric mollifier $\rho_\eps$, supported on the closed ball with radius $\eps>0$ around $0$. By the properties of mollification, $\hat{u}_\eps:=\rho_{\eps}\ast(\eta\hat{u})$ converges to $\hat{u}$ in $L^1(0,1)$ as $\eps\downarrow 0$. Moreover, due to (\ref{Du0}) and Proposition 3.7, p.121 in \cite{AFP} it holds
\[
 |D(\hat{u}_\eps)|(0,1)\xrightarrow{\eps\downarrow 0} |D(\eta\hat{u})|(0,1)=|Du|(0,1)
\]
and by similar arguments also
\[
 \intop_0^1\sqrt{1+|D(\hat{u}_\eps)|^2}\dt=\left|\rho_{\eps}\ast(\mathcal{L}^1,D(\eta\hat{u}))^T\right|(0,1)\xrightarrow{\eps\downarrow 0}  \sqrt{1+| Du|^2}(0,1).
\]
Hence $u_k:=\rho_{1/k}\ast \eta\hat{u}$ for $k\in\N$ has the desired properties. From Proposition 2.3 in \cite{AG} it follows
\[
 J[v]\leq J[u_k]=K[u_k]\xrightarrow{k\rightarrow\infty}K[u],
\]
and since $u$ is $K$-minimal, we conclude
\[
 K[u]\leq K[v]=J[v]\leq K[u],
\]
which means $K[u]=K[v]$ and thus $u=v$ by uniqueness of the $K$-minimizer. \qed
\end{section}
\newpage
\begin{section}{Comparison with a numerical example}
In this short appendix we would like to compare the theoretical considerations from  above to a numerical example which has been computed with the free software Scilab \footnote{http://www.scilab.org/}. Besides giving a confirmation of our previous results, this is mainly intended to show that none of our given bounds on the parameter $\lambda$ is actually sharp. In fact, we seem to obtain smooth solutions for values of $\lambda$ larger than $\max\{\lambda_\infty,\omega_\infty\}$ and discontinuous minimizers can occur below the threshold $\frac{8}{\mu-2}$ which has been predicted by Corollary \ref{Cor1.2} b).  It is still an open problem to determine  exact bounds, which clearly should depend on both $F$ and $f$ as well.

We choose the data $f$ from (\ref{specialdata}), i.e. $f$ is constant on $ [0,\nicefrac{1}{2}]$ and $(\nicefrac{1}{2},1]$ with a single jump of height $1$ at $t=\nicefrac{1}{2}$ and the $\mu$-elliptic density  $F(p)=\Phi_3(|p|)$ (remember, that by Theorem \ref{Thm1.6} there will be no singular minimizers for $\mu\leq2$ which is the  justification for our choice $\mu=3$). Then our $K$-minimizer $u$ should be smooth for $\lambda<8\omega_\infty=4$. 
\begin{figure}[!h]
\includegraphics[scale=1.5]{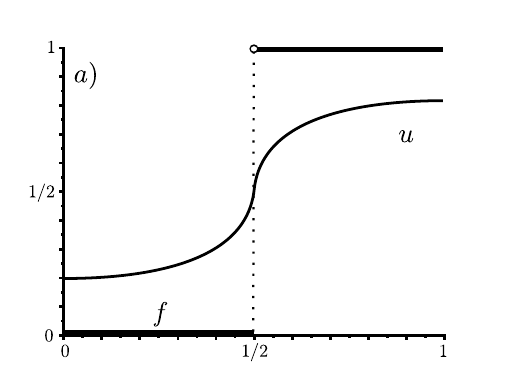}
\includegraphics[scale=1.5]{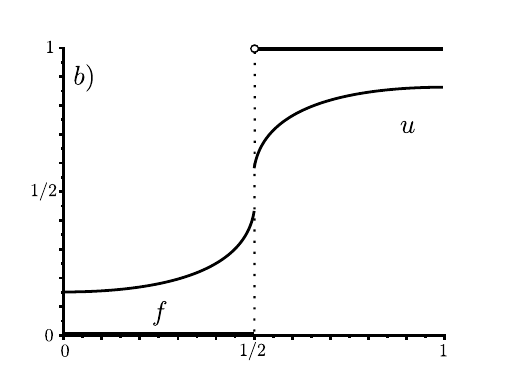}
\caption{Example plots of the $K$-minimizer $u$ for $\mu=3$ and $a)$ $\lambda=4$, $b)$ $\lambda=5$.}\label{fig4}
\end{figure}
In practice, we seem to get smooth solutions up to about $\lambda<4.16$. For $\lambda=4.16$ the tangent of $u$ at $t=\nicefrac{1}{2}$ becomes nearly vertical and for $\lambda>4.16$ the minimizer develops a jump. We have depicted in figure \ref{fig4} above exemplarily the graphs of $u$ for $\lambda=4$ and $\lambda=5$. Further we would like to note, that for $\lambda=4.16$ the value of $u(0)$ is approximately $0.183$ which yields for the bound (\ref{fint}) established in the proof of Theorem \ref{Thm1.5}
\begin{align*}
u(s)\leq\sqrt{0.183^2+\frac{1}{4.16}}\approx 0.523,
\end{align*}
and thus suits to our previous considerations quite well.
\end{section}

\noindent  Martin Fuchs, Jan M\"uller, Christian Tietz\\
Department of Mathematics\\
Saarland University\\
P.O.~Box 151150\\
66041 Saarbr\"ucken\\
Germany\\
fuchs@math.uni-sb.de, jmueller@math.uni-sb.de, tietz@math.uni-sb.de

\end{document}